\numberwithin{equation}{section}
\numberwithin{figure}{section}
\newtheorem{theorem}{Theorem}[section]
\newtheorem{corollary}[theorem]{Corollary}
\newtheorem{lemma}[theorem]{Lemma}
\DeclareMathOperator{\eup}{e}
\DeclareMathOperator{\ch}{ch}
\DeclareMathOperator{\mult}{mult}
\newcommand{\Rat}{\mathbb Q}
\newcommand{\Z}{\mathbb Z}
\newcommand{\NN}{\Z_{+}}
\newcommand{\Symm}{\mathfrak{S}}
\newcommand{\M}{\mathscr{M}}
\newcommand{\abs}[1]{\lvert#1\rvert}
\newcommand{\la}{\lambda}
\newcommand{\La}{\Lambda}
\newcommand{\ip}[2]{\langle#1,#2\rangle}
\newcommand{\A}{\mathrm A}
\newcommand{\B}{\mathrm B}
\newcommand{\C}{\mathrm C}
\newcommand{\D}{\mathrm D}
\newcommand{\qbin}[2]{\genfrac{[}{]}{0pt}{}{#1}{#2}}
\newcommand{\hfrak}{\mathfrak{h}}
\begin{document}

\title[The $\A_{2n}^{(2)}$ Rogers--Ramanujan identities]{The $\boldsymbol{\A_{2n}^{(2)}}$ Rogers--Ramanujan identities}

\author{S.~Ole Warnaar}

\address{School of Mathematics and Physics,
The University of Queensland, Brisbane, QLD 4072, Australia}

\thanks{Work supported by the Australian Research Council}

\subjclass[2010]{05E05, 05E10, 11P84, 17B67, 33D67}

\begin{abstract}
The famous Rogers--Ramanujan and Andrews--Gordon 
identities are embedded in a doubly-infinite family
of Rogers--Ramanujan-type identities labelled by positive 
integers $m$ and $n$.
For fixed $m$ and $n$ the product side 
corresponds to a specialised character of the 
affine Kac--Moody algebra $\A_{2n}^{(2)}$ at level $m$,
and is expressed as a product of $n^2$ theta functions of modulus 
$2m+2n+1$, or by level-rank duality, as a product of
$m^2$ theta functions.
Rogers--Ramanujan-type identities 
for even moduli, corresponding to the affine Lie algebras
$\C_n^{(1)}$ and $\D_{n+1}^{(2)}$, 
and arbitrary moduli, corresponding to $\A_{n-1}^{(1)}$,
are also proven.
\end{abstract}

\maketitle

\section{Introduction}

The celebrated Rogers--Ramanujan (RR) identities \cite{Rogers94}
\begin{equation}\label{Eq_RR}
1+\sum_{r=0}^{\infty} \frac{q^{r(r+\sigma)}}{(1-q)\cdots(1-q^r)}=
\prod_{j=0}^{\infty} \frac{1}{(1-q^{5j+\sigma+1})(1-q^{5j-\sigma+4})}
\end{equation}
for $\sigma=0,1$ are two of the most important combinatorial 
identities in all of mathematics, with a remarkably wide 
range of applications.
First recognised by MacMahon and Schur as identities for integer partitions
\cite{MacMahon60,Schur17}, they have since been linked to
algebraic geometry \cite{BMS13}, $\mathrm{K}$-theory \cite{DS94},
conformal field theory \cite{BM98,KMM95}, group theory \cite{Fulman00},
Kac--Moody and double affine Hecke algebras 
\cite{CF12,LM78a,LM78b,LW81a,LW81b,LW82,LW84},
knot theory \cite{AD11,Hikami03,Hikami06}, modular forms \cite{BOR08,Ono09},
orthogonal polynomials \cite{AI83,Bressoud81,GIS99},
statistical mechanics \cite{ABF84,Baxter81}, probability \cite{Fulman01} and
transcendental number theory \cite{RS81}.

In 1974 Andrews \cite{Andrews74} extended \eqref{Eq_RR} 
to an infinite family of Rogers--Rama\-nu\-jan-type identities by 
proving that
\begin{equation}\label{Eq_AG}
\sum_{r_1\geq\dots\geq r_m\geq 0}
\frac{q^{r_1^2+\cdots+r_m^2+r_i+\cdots+r_m}}
{(q)_{r_1-r_2}\cdots(q)_{r_{m-1}-r_m}(q)_{r_m}}=
\frac{(q^{2m+3};q^{2m+3})_{\infty}}{(q)_{\infty}}\,
\theta(q^i;q^{2m+3}),
\end{equation}
where $1\leq i\leq m+1$, $(a)_k=(a;q)_k=(1-a)(1-aq)\cdots (1-aq^{k-1})$ 
(for $k\in\{0,1,\dots\}\cup\{\infty\}$) a $q$-shifted factorial
and $\theta(a;q)=(a;q)_{\infty}(q/a;q)_{\infty}$ a modified
theta function.
The identities \eqref{Eq_AG}, which can be viewed as the analytic counterpart 
of Gordon's partition theorem \cite{Gordon61}, are now commonly referred to
as the Andrews--Gordon (AG) identities.

The various Lie-algebraic interpretations of the Rogers--Ramanujan
and Andrews--Gordon identities attach algebras of low
rank to \eqref{Eq_RR} and \eqref{Eq_AG}. For example, from the above-cited
works of Milne, Lepowsky and Wilson 
it follows that they arise as principally 
specialised characters of integrable highest-weight modules 
of the affine Kac--Moody algebra $\A_1^{(1)}$. 
This raises the question as to whether \eqref{Eq_RR} 
and \eqref{Eq_AG} can be embedded in a larger
family of Rogers--Ramanujan-type identities by considering 
specialised characters of an appropriately chosen affine Lie algebra 
$\mathrm{X}_N^{(r)}$ for arbitrary $N$.
In \cite{ASW99} (see also \cite{FFW08,Warnaar06})
some partial results concerning this question 
were obtained, resulting in Rogers--Ramanujan-type 
identities for $\A_2^{(1)}$. Unfortunately, the approach of \cite{ASW99} 
does not in any obvious manner extend to $\A_n^{(1)}$ for all $n$.

In this paper we give a more satisfactory answer to the above
question by proving Rogers--Ramanujan and 
Andrews--Gordon identities for $\A_{2n}^{(2)}$ for arbitrary $n$.
In their most compact form, the sum-sides are expressed in terms of 
Hall--Littlewood polynomials $P_{\la}(x;q)$
evaluated at infinite geometric progressions.

Let $\theta(a_1,\dots,a_k;q)=\theta(a_1;q)\cdots\theta(a_k;q)$ and 
for $\la=(\la_1,\la_2,\dots)$ an integer partition, let 
$\abs{\la}:=\la_1+\la_2+\cdots$, $2\la:=(2\la_1,2\la_2,\dots)$
and $\la'$ the conjugate of $\la$.
For example, if $\la=(5,3,3,1)$ then $\abs{\la}=12$, $2\la=(10,6,6,2)$
and $\la'=(4,3,3,1,1)$.

\begin{theorem}[$\A_{2n}^{(2)}$ RR and AG identities]\label{Thm_Main}
For $m$ and $n$ positive integers let $\kappa=2m+2n+1$. Then
\begin{subequations}\label{Eq_RR-A2n2}
\begin{align}\label{Eq_RR-A2n2a}
\sum_{\substack{\la \\[1pt] \la_1\leq m}}&
q^{\abs{\la}} P_{2\la}\big(1,q,q^2,\dots;q^{2n-1}\big) \\
&=\frac{(q^{\kappa};q^{\kappa})_{\infty}^n}{(q)_{\infty}^n} 
\prod_{i=1}^n  \theta\big(q^{i+m};q^{\kappa}\big)
\prod_{1\leq i<j\leq n} 
\theta\big(q^{j-i},q^{i+j-1};q^{\kappa}\big) \notag \\
&=\frac{(q^{\kappa};q^{\kappa})_{\infty}^m}{(q)_{\infty}^m} 
\prod_{i=1}^m  \theta\big(q^{i+1};q^{\kappa}\big)
\prod_{1\leq i<j\leq m} 
\theta\big(q^{j-i},q^{i+j+1};q^{\kappa}\big) \notag
\intertext{and}
\label{Eq_RR-A2n2b}
\sum_{\substack{\la \\[1pt] \la_1\leq m}}&
q^{2\abs{\la}} P_{2\la}\big(1,q,q^2,\dots;q^{2n-1}\big) \\
&=\frac{(q^{\kappa};q^{\kappa})_{\infty}^n}{(q)_{\infty}^n} 
\prod_{i=1}^n  \theta\big(q^i;q^{\kappa}\big)
\prod_{1\leq i<j\leq n} 
\theta\big(q^{j-i},q^{i+j};q^{\kappa}\big) \notag \\
&=\frac{(q^{\kappa};q^{\kappa})_{\infty}^m}{(q)_{\infty}^m} 
\prod_{i=1}^m  \theta\big(q^i;q^{\kappa}\big)
\prod_{1\leq i<j\leq m} 
\theta\big(q^{j-i},q^{i+j};q^{\kappa}\big). \notag
\end{align}
\end{subequations}
\end{theorem}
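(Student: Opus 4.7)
The plan is to convert the Hall--Littlewood sum on the left into an explicit multi-sum via principal specialization, and then to identify this multi-sum with a level-$m$ specialization of a Macdonald--Kac denominator identity for the twisted affine algebra $\A_{2n}^{(2)}$.

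First, I would apply the principal specialization formula for Hall--Littlewood polynomials at an infinite geometric progression. For a partition $\mu$ of length at most $N$, there is a well-known closed form for $P_\mu(1,q,q^2,\dots;t)$ as a product over the boxes (equivalently the parts) of $\mu$. Specializing to $\mu=2\la$ and $t=q^{2n-1}$ converts the left-hand side of \eqref{Eq_RR-A2n2a} into an explicit $n$-fold $q$-series sum over $m\geq\la_1\geq\cdots\geq\la_n\geq 0$, with summand a concrete ratio of $q$-shifted factorials times a power of $q$ determined by $\abs{\la}$ and the Hall--Littlewood statistic $n(2\la)$.

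Next, I would antisymmetrize this truncated sum, converting it into an alternating sum over the Weyl group $W(\BC_n)$ weighted by powers of $q$. Such a conversion is the standard bridge between Hall--Littlewood/Macdonald-type sums and Weyl--Kac denominator identities. The resulting alternating sum should match the denominator side of the Macdonald identity for $\A_{2n}^{(2)}$ with level parameter $m$; taking the product side of that identity and collecting factors then yields the $n^2$-fold theta product of modulus $\kappa=2m+2n+1$ on the right of \eqref{Eq_RR-A2n2a}. Identity \eqref{Eq_RR-A2n2b} is treated in parallel, the extra factor of $q^{\abs{\la}}$ corresponding to a shift of highest weight (essentially a translation by a minuscule weight of $\A_{2n}^{(2)}$).

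The second product form in each identity, with $m^2$ rather than $n^2$ theta factors, is the $\A_{2n}^{(2)}$ level-rank duality, and I would obtain it by applying the Macdonald denominator identity in its rank-$m$ dual form, or equivalently from a purely classical theta-function identity. The main obstacle I expect is the antisymmetrization step: it must be carried out compatibly with the doubling $\la\mapsto 2\la$ and the parameter $q^{2n-1}$, both of which reflect the order-$2$ diagram automorphism defining $\A_{2n}^{(2)}$. Getting the signs, $q$-exponents, and summation ranges correct so that the alternating sum matches the Macdonald denominator for $\A_{2n}^{(2)}$ (and not the untwisted $\A_{2n}^{(1)}$ or $\C_n^{(1)}$) is the delicate technical heart of the argument.
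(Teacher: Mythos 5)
Your opening step rests on a formula that does not exist. The box--product evaluation of $P_\mu(1,q,q^2,\dots;t)$ holds only when the geometric ratio coincides with the Hall--Littlewood parameter, i.e.\ for $t=q$; this is the formula from \cite[p.~213]{Macdonald95} quoted in the introduction, and it is exactly what produces the classical Andrews--Gordon sums. For independent $t$ (in particular $t=q^{2n-1}$ with $n\geq 2$) no such product formula exists: for instance
\[
P_{(2,1)}\big(1,q,q^2,\dots;t\big)=\frac{q\,\big(1+q-(t+t^2)q^2\big)}{(1-q)(1-q^2)(1-q^3)},
\]
whose numerator factors nicely only at $t=q$, and the even shapes actually needed here behave no better -- this is precisely why the paper's explicit forms, \eqref{Eq_comb} in general and \eqref{Eq_Q2r} for two-column rectangles, are genuine multiple sums rather than products. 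You have also misread the index set: the sum in \eqref{Eq_RR-A2n2a} runs over partitions with parts at most $m$ but of \emph{unbounded length}, so after specialisation one gets (in conjugate coordinates) a sum whose depth is governed by $m$ and the flag structure of \eqref{Eq_comb}, not an ``$n$-fold sum over $m\geq\la_1\geq\cdots\geq\la_n\geq 0$''.

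More seriously, your second step hides the entire difficulty of the theorem. There is no ``standard bridge'' antisymmetrising such a Hall--Littlewood sum over $W(\BC_n)$ into a Weyl-type alternating sum, and ``the Macdonald identity for $\A_{2n}^{(2)}$ with level parameter $m$'' conflates the (level-independent) denominator identity with the Weyl--Kac character formula; what must be proved is that the sum side equals a specialised level-$m$ character, and that identification is the open content, not a known input. In the paper it is supplied by specific machinery: the $\C_n$ Andrews transformation of Bartlett--Warnaar, whose limiting case is the level-$m$ $\C_n$ Rogers--Selberg identity \eqref{Eq_Cn-RS-m}; the rank-doubling limits \eqref{Eq_key}, merging pairs $y_i\to x_i^{-1}$ with one variable left over and sent to $1$ (for \eqref{Eq_RR-A2n2a}) or set to $q^{1/2}$ (for \eqref{Eq_RR-A2n2b}), leading to \eqref{Eq_a2n2} and \eqref{Eq_a2n2b}; and, after specialisation, a determinant evaluation via the $\B_n$ Weyl denominator formula combined with the unrestricted variant \eqref{Eq_Gustafson} of the $\B_n^{(1)}$ Macdonald identity, which does not follow from the Macdonald identity itself (that carries a parity restriction) but from Gustafson's $\A_{2n-1}^{(2)}$ multiple ${}_6\psi_6$ summation. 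None of these ingredients, nor substitutes for them, appear in your outline. Note also that \eqref{Eq_RR-A2n2b} is not a ``minuscule shift'' of \eqref{Eq_RR-A2n2a}: the two identities correspond to the inequivalent highest weights $m\La_n$ and $2m\La_0$ of $\A_{2n}^{(2)}$ and require separate derivations. The only part of your proposal that stands as stated is the last: the $m^2$-theta product form does follow from the $n^2$-theta form by elementary manipulation of infinite products.
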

We note the beautiful level-rank duality exhibited by the products
on the right, especially those of \eqref{Eq_RR-A2n2b}. 
We also note that for $n=1$ we recover the Rogers--Ramanujan identities 
and the $i=1$ and $m+1$ instances of the Andrews--Gordon identities 
in a representation due to Stembridge \cite{Stembridge90} 
(see also \cite{Fulman00}).
The equivalence with \eqref{Eq_RR} and \eqref{Eq_AG}
follows from the specialisation formula \cite[p. 213]{Macdonald95}
\[
q^{(\sigma+1)\abs{\la}} P_{2\la}(1,q,q^2,\dots;q)=
\prod_{i\geq 1} \frac{q^{r_i(r_i+\sigma)}}{(q)_{r_i-r_{i+1}}},
\qquad r_i:=\la'_i,
\]
and the fact that $\la_1\leq m$ implies that
$\la'_i=r_i=0$ for $i>m$.
As shown in the next section, the more general 
$P_{\la}(1,q,q^2,\dots;q^n)$ is also expressible 
in terms of $q$-shifted factorials, allowing for
a formulation of Theorem~\ref{Thm_Main}
free of Hall--Littlewood polynomials.

\medskip
  
We have also found an even modulus analogue of Theorem~\ref{Thm_Main}.
Surprisingly, the $\sigma=0$ and $\sigma=1$ cases correspond to dual affine 
Lie algebras.

\begin{theorem}[$\C_n^{(1)}$ RR and AG identities]\label{Thm_Main2}
For $m$ and $n$ positive integers let $\kappa=2m+2n+2$. Then
\begin{align}\label{Eq_RR-Cn}
\sum_{\substack{\la \\[1pt] \la_1\leq m}}&
q^{\abs{\la}} P_{2\la}\big(1,q,q^2,\dots;q^{2n}\big) \\
&=\frac{(q^2;q^2)_{\infty}(q^{\kappa/2};q^{\kappa/2})_{\infty}
(q^{\kappa};q^{\kappa})_{\infty}^{n-1}}{(q)_{\infty}^{n+1}}  \notag \\ 
&\qquad\qquad\; \times
\prod_{i=1}^n  \theta\big(q^i;q^{\kappa/2}\big)
\prod_{1\leq i<j\leq n} \theta\big(q^{j-i},q^{i+j};q^{\kappa}\big)
\notag \\
&=\frac{(q^{\kappa};q^{\kappa})_{\infty}^m}{(q)_{\infty}^m} 
\prod_{i=1}^m  \theta\big(q^{i+1};q^{\kappa}\big)
\prod_{1\leq i<j\leq m} 
\theta\big(q^{j-i},q^{i+j+1};q^{\kappa}\big). \notag
\end{align}
\end{theorem}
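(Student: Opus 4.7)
The plan is to mirror the approach used to establish Theorem~\ref{Thm_Main}, now in the even-modulus regime where the Hall--Littlewood parameter is $q^{2n}$ rather than $q^{2n-1}$. The first step is to invoke the product formula for $P_\la(1,q,q^2,\dots;q^n)$ (referenced in the remark following Theorem~\ref{Thm_Main}) to rewrite the sum-side as a multi-sum of explicit $q$-shifted factorials indexed by $r_1 \ge r_2 \ge \cdots \ge r_m \ge 0$, with $r_i = \la'_i$. This converts the claim into an identity between an Andrews--Gordon-type multi-sum and a product of theta functions.

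To evaluate the multi-sum, I would appeal to a Macdonald--Morris-type identity (equivalently, the Weyl--Kac denominator formula) for the affine Lie algebra $\C_n^{(1)}$ at level $m$. The $\C_n^{(1)}$ root system contains both short and long roots, and its denominator naturally produces theta functions of moduli $\kappa/2$ and $\kappa$; this structural feature should account for the asymmetric prefactor $(q^2;q^2)_\infty (q^{\kappa/2};q^{\kappa/2})_\infty (q^\kappa;q^\kappa)_\infty^{n-1}$ as well as the mixed factors $\theta(q^i;q^{\kappa/2})$ and $\theta(q^{j-i},q^{i+j};q^\kappa)$ on the first product side.

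For the second equality, whose right-hand side has the same form as the $m$-product in \eqref{Eq_RR-A2n2a} but with the shifted value $\kappa = 2m+2n+2$, I would establish a direct theta function identity between the two product expressions. This should reduce, via repeated application of the Jacobi triple product and standard theta function manipulations, to a finite identity among theta constants of moduli $\kappa$, $\kappa/2$ and $2$. Alternatively, a level-rank duality between $\C_n^{(1)}$ at level $m$ and a rank-$m$ twisted affine algebra would deliver the same conclusion intrinsically.

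The principal obstacle is the second step: correctly matching the specialised Hall--Littlewood sum with the $\C_n^{(1)}$ Weyl--Kac character. Unlike the $\A_{2n}^{(2)}$ setting of Theorem~\ref{Thm_Main}, where all roots enter symmetrically, the mixing of moduli in $\C_n^{(1)}$ demands careful bookkeeping of which root contributions produce the $\kappa/2$ versus $\kappa$ factors. In particular, the decoration $(q^2;q^2)_\infty$ --- which has no counterpart in Theorem~\ref{Thm_Main} --- suggests that the derivation is not a bare specialisation from the earlier result, but rather involves an auxiliary identity folding an $\A_{2n}^{(2)}$-type multi-sum into its $\C_n^{(1)}$ counterpart, or a genuinely new Rogers--Selberg transformation tailored to the $t = q^{2n}$ evaluation.
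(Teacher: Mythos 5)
Your proposal has a genuine gap at its centre, and you partly acknowledge it yourself. The step ``rewrite the sum side via the product formula for $P_\la(1,q,q^2,\dots;q^n)$ as an Andrews--Gordon multi-sum indexed by $r_1\ge\cdots\ge r_m$'' is already problematic: that simple formula holds only for Hall--Littlewood parameter $q$ (the $n=1$ case quoted after Theorem~\ref{Thm_Main}); for parameter $q^{2n}$ one only has the much more involved flag-sum \eqref{Eq_comb}, and in any case the paper never converts to a combinatorial multi-sum --- the proof works with the unspecialised modified Hall--Littlewood polynomials $P'_{2\la}(x;q)$ throughout. More seriously, ``appeal to a Macdonald--Morris/Weyl--Kac identity for $\C_n^{(1)}$ at level $m$'' is not an argument: the entire difficulty of the theorem is to prove that the specialised Hall--Littlewood sum \emph{equals} that specialised character, and no denominator or specialisation formula by itself supplies this link. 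The paper bridges it with a specific chain you do not have: the $\C_n$ Rogers--Selberg identity \eqref{Eq_Cn-RS-m} (from the $\C_n$ Andrews transformation of \cite{BW13}) taken at \emph{doubled} rank $2n$, followed by the limit lemma \eqref{Eq_key} which collapses the variables in pairs $y_i\to x_i^{-1}$ to give \eqref{Eq_CnmLa0}; only then does the principal-type specialisation \eqref{Eq_subs} become tractable, and the resulting theta sum $\M$ is evaluated by rewriting it as a determinant via the $\C_n$ and $\B_n$ Weyl denominator formulas \eqref{Eq_detC}, \eqref{Eq_Bn-VdM} and applying the $\D_{n+1}^{(2)}$ Macdonald identity --- not a $\C_n^{(1)}$ identity directly, though this is its dual incarnation, which is why the mixed moduli $\kappa/2$ and $\kappa$ and the prefactor $(q^2;q^2)_\infty$ appear.

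Your closing speculation that one needs ``a genuinely new Rogers--Selberg transformation tailored to the $t=q^{2n}$ evaluation'' is close in spirit --- the $\C_n$ Rogers--Selberg identity is indeed the engine --- but the decisive idea you are missing is the rank-doubling plus pairing limit $y_i\to x_i^{-1}$ of \eqref{Eq_key}, which is exactly what circumvents the known obstruction (Milne's attempts at direct specialisation of \eqref{Eq_Cn-RS-m} succeed only for $\C_2$). Without that mechanism, neither your first product form nor the subsequent determinant/Macdonald evaluation can be carried out, so the proposal as it stands does not constitute a proof; the second equality (the $m$-fold product) is, as you suggest and as the paper states, a routine infinite-product manipulation once the first is established.
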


\begin{theorem}[$\D_{n+1}^{(2)}$ RR and AG identities]\label{Thm_Main3}
For $m$ and $n$ positive integers such that $n\geq 2$ let $\kappa=2m+2n$. Then
\begin{align}\label{Eq_RR-Dn}
\sum_{\substack{\la \\[1pt] \la_1\leq m}}&
q^{2\abs{\la}} P_{2\la}\big(1,q,q^2,\dots;q^{2n-2}\big) \\
&=\frac{(q^{\kappa};q^{\kappa})_{\infty}^n}
{(q^2;q^2)_{\infty}(q)_{\infty}^{n-1}} 
\prod_{1\leq i<j\leq n} \theta\big(q^{j-i},q^{i+j-1};q^{\kappa}\big) \notag \\
&=\frac{(q^{\kappa};q^{\kappa})_{\infty}^m}{(q)_{\infty}^m} 
\prod_{i=1}^m  \theta\big(q^i;q^{\kappa}\big)
\prod_{1\leq i<j\leq m} 
\theta\big(q^{j-i},q^{i+j};q^{\kappa}\big). \notag
\end{align}
\end{theorem}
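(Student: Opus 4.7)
The plan is to derive \eqref{Eq_RR-Dn} by the same strategy used for Theorem~\ref{Thm_Main}, exploiting the fact that the left-hand side differs from that of \eqref{Eq_RR-A2n2b} only in the inner Hall--Littlewood parameter: here it is $q^{2n-2}$ instead of $q^{2n-1}$. As a first step I would invoke the product formula for the principal specialisation $P_\mu(1, q, q^2, \dots; q^k)$ promised in the paper just after Theorem~\ref{Thm_Main}, applied with $k = 2n-2$ and $\mu = 2\la$. This converts the sum side into an explicit multiple $q$-series in the variables $r_i = \la'_i$, summed over $r_1 \ge \dots \ge r_m \ge 0$ (the bound $\la_1 \le m$ truncating the column heights of $\la$).

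Next I would match this multiple sum against the first product on the right by the same Bailey-chain or Macdonald-identity machinery that proves Theorem~\ref{Thm_Main}. The even-modulus variant $\kappa = 2m+2n$ and the appearance of $(q^2;q^2)_\infty$ in place of an additional $(q;q)_\infty$ should emerge from parity-splitting identities such as $(q;q)_\infty = (q;q^2)_\infty (q^2;q^2)_\infty$ arising naturally during the telescoping, since $q^{2n-2}$ is an even power. The restriction $n \ge 2$ is a strong hint that this argument degenerates at $n=1$, where the inner parameter $q^{2n-2} = 1$ makes the Hall--Littlewood polynomial trivial; this case must be excluded.

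For the level-rank duality (the equality of the $n$-fold and $m$-fold theta products on the right), I would use a theta function identity of Macdonald--Watson type, independent of the sum side. Both products arise from Weyl--Kac denominator formulas for $\D_{n+1}^{(2)}$ and its dual at levels $m$ and $n$, respectively, so their equivalence should follow from the same modular/theta manipulations that establish the analogous level-rank identities in Theorem~\ref{Thm_Main}; in fact, a quick comparison shows the second product of \eqref{Eq_RR-Dn} is literally the $q \mapsto q^{\kappa/\kappa'}$ image of a piece in \eqref{Eq_RR-A2n2b}, which suggests the theta manipulation will be essentially uniform across the three theorems. The principal obstacle will be the second step: adapting the A-case summation machinery to produce exactly the $(q^2;q^2)_\infty$ factor, which requires a parity-sensitive modification of the telescoping argument—this is presumably where most of the technical difficulty lies, since the clean product pattern of the $\A_{2n}^{(2)}$ case does not transfer verbatim and must be reassembled from two independent parity strands of the sum.
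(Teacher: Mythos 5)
There is a genuine gap: the route you sketch is essentially the ``Watson--Andrews'' route that the paper explicitly abandons, and the two concrete devices you lean on do not exist in the form you need. First, the promised expression of $P_{2\la}(1,q,q^2,\dots;q^{2n-2})$ in terms of $q$-shifted factorials is \emph{not} a product formula in the single string of variables $r_i=\la'_i$: for Hall--Littlewood parameter $q^k$ with $k>1$ it is the flag sum \eqref{Eq_comb}, over chains of partitions $\mu^{(n)}\subseteq\cdots\subseteq\mu^{(0)}$, and the simple $r_1\geq\cdots\geq r_m$ form you invoke only occurs at $k=1$ (the Andrews--Gordon case). Second, there is no ``same Bailey-chain machinery'' acting on that specialised multiple sum: the proof of Theorem~\ref{Thm_Main} never touches the combinatorial sum side. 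It works \emph{before} specialisation, at the level of modified Hall--Littlewood polynomials, starting from the $\C_n$ Rogers--Selberg identity \eqref{Eq_Cn-RS-m}; the paper even states that specialising \eqref{Eq_Cn-RS-m} directly as in \eqref{Eq_spec} and then producing a product form is very difficult, which is exactly the step your plan leaves to ``the same machinery''.

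What the actual proof of \eqref{Eq_RR-Dn} requires, and what is missing from your proposal, is the rank-inflation/limit mechanism: replace $x$ by $(x_1,y_1,\dots,x_{n-2},y_{n-2},x_{n-1},x_n)$ in \eqref{Eq_Cn-RS-m}, iterate the limit lemma \eqref{Eq_key} with $y_i\to x_i^{-1}$ leaving \emph{two} variables unpaired (this is the real reason for $n\geq 2$, not a degeneration of the sum side), then send $(x_{n-1},x_n)\to(q^{1/2},1)$ to trade the $\C$-type Vandermonde for a $\B$-type one, symmetrise in $r_{n-1}$ and $r_n$ to get sums over $\Z^n$, specialise $q\mapsto q^{2n-2}$, $x_i\mapsto q^{n-i}$, and finally pass through the $\B_n$ and $\D_n$ Weyl denominator formulas to a variant of the $\D_n^{(1)}$ Macdonald identity obtained from Gustafson's $\A_{2n-1}^{(2)}$ ${}_6\psi_6$ summation. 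The factor $(q^2;q^2)_{\infty}$ arises from the theta-product evaluation at this specialisation (via \eqref{Eq_simp}), not from a parity splitting of a telescoping sum. Your remarks on the second equality (level-rank duality) are fine --- that part is indeed a routine infinite-product manipulation --- but the core analytic content of the theorem is not reached by your plan.
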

The $(m,n)=(1,2)$ case of \eqref{Eq_RR-Dn} is equivalent to 
Milne's modulus $6$ Rogers--Ramanujan identity \cite[Theorem 3.26]{Milne94}.

By combining \eqref{Eq_RR-A2n2}--\eqref{Eq_RR-Dn}
we obtain an identity of mixed type.
\begin{corollary}
For $m$ and $n$ positive integers let $\kappa=2m+n+2$. Then
\begin{multline}\label{Eq_mixed}
\sum_{\substack{\la \\[1pt] \la_1\leq m}}
q^{(\sigma+1)\abs{\la}} P_{2\la}\big(1,q,q^2,\dots;q^n\big) \\
=\frac{(q^{\kappa};q^{\kappa})_{\infty}^m}{(q)_{\infty}^m} 
\prod_{i=1}^m  \theta\big(q^{i-\sigma+1};q^{\kappa}\big)
\prod_{1\leq i<j\leq m} 
\theta\big(q^{j-i},q^{i+j-\sigma+1};q^{\kappa}\big),
\end{multline}
where $\sigma=0,1$.
\end{corollary}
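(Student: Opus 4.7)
The plan is a case analysis on the parity of $n$, combined with the two values of $\sigma$, reducing each of the four cases to one of Theorems~\ref{Thm_Main}--\ref{Thm_Main3}. The essential observation is that the right-hand side of \eqref{Eq_mixed} already matches, term-for-term, the second (level-rank dual) product expression appearing on the right-hand side of each of those theorems, so the corollary is really just a consolidated restatement of that second product form. The theta-function shifts $i-\sigma+1$ and $i+j-\sigma+1$ take the values $i+1$, $i+j+1$ when $\sigma=0$ and $i$, $i+j$ when $\sigma=1$, which are exactly the two patterns that occur in the dual product forms.

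First I would handle the odd case, writing $n=2n'-1$ so that $\kappa=2m+n+2=2m+2n'+1$ agrees with the modulus of Theorem~\ref{Thm_Main}. For $\sigma=0$ the identity \eqref{Eq_mixed} becomes the second equality in \eqref{Eq_RR-A2n2a} with $n$ replaced by $n'$, and for $\sigma=1$ it becomes the second equality in \eqref{Eq_RR-A2n2b} with the same substitution. In both cases the sum-side $q^{(\sigma+1)\abs{\la}}P_{2\la}(1,q,q^2,\dots;q^{2n'-1})$ matches verbatim.

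Next I would treat the even case, writing $n=2n'$ so that $\kappa=2m+2n'+2$. For $\sigma=0$ I would apply Theorem~\ref{Thm_Main2} with $n$ replaced by $n'$: the sum-side $q^{\abs{\la}}P_{2\la}(1,q,q^2,\dots;q^{2n'})$ coincides with that of \eqref{Eq_mixed}, and the third expression in \eqref{Eq_RR-Cn} matches the right-hand side of \eqref{Eq_mixed}. For $\sigma=1$ I would apply Theorem~\ref{Thm_Main3} with $n$ replaced by $n'+1$, yielding $2n-2=2n'$ and $\kappa_{\text{Thm}~\ref{Thm_Main3}}=2m+2(n'+1)=2m+n+2$; the hypothesis $n\geq 2$ of Theorem~\ref{Thm_Main3} is automatic since $n'\geq 1$. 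Once more, both the sum-side and the third expression in \eqref{Eq_RR-Dn} match \eqref{Eq_mixed} verbatim.

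Since the entire argument is a re-parametrisation exercise, there is no substantive obstacle; all the depth of the corollary sits inside Theorems~\ref{Thm_Main}--\ref{Thm_Main3}. The only minor care required is to verify that the range restriction $n\geq 2$ in Theorem~\ref{Thm_Main3} is not violated, which is handled automatically by the shift $n\mapsto n'+1$ described above.
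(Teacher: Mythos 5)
Your proof is correct and matches the paper's intent: the paper states the corollary as an immediate consequence of combining Theorems~\ref{Thm_Main}--\ref{Thm_Main3}, and your parity-of-$n$ and $\sigma$ case analysis, matching the right-hand side of \eqref{Eq_mixed} with the second (rank-$m$) product forms, is exactly the re-parametrisation the paper has in mind (including the shift $n\mapsto n'+1$ that keeps the $n\geq 2$ hypothesis of Theorem~\ref{Thm_Main3} satisfied).
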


Rogers--Ramanujan-type identities for $\A_{n-1}^{(1)}$ also exist, 
although their formulation is perhaps slightly less satisfactory,
involving a limit. 
\begin{theorem}[$\A_{n-1}^{(1)}$ RR and AG identities]\label{Thm_Main4}
For $m$ and $n$ positive integers let $\kappa=m+n$. Then
\begin{align*}
\lim_{r\to\infty}
q^{-m\binom{r}{2}} 
P_{(m^r)}(1,q,q^2,\dots;q^n)
&=\frac{(q^{\kappa};q^{\kappa})_{\infty}^{n-1}}{(q)_{\infty}^n}
\prod_{1\leq i<j\leq n} \theta(q^{j-i};q^{\kappa}) \\
&=\frac{(q^{\kappa};q^{\kappa})_{\infty}^{m-1}}{(q)_{\infty}^m} 
\prod_{1\leq i<j\leq m} \theta(q^{j-i};q^{\kappa}).
\end{align*}
\end{theorem}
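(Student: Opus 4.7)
The plan is to evaluate the rectangular Hall--Littlewood polynomial $P_{(m^r)}(1,q,q^2,\ldots;q^n)$ in closed product form and then let $r\to\infty$. Since $n((m^r))=m\binom{r}{2}$, the prefactor $q^{-m\binom{r}{2}}$ strips off precisely the dominant $q$-power produced by the principal specialisation, so one expects a finite and non-zero limit. The first step is to invoke the product formula for $P_{\la}(1,q,q^2,\ldots;q^n)$ in terms of $q$-shifted factorials that, as promised in the remark following Theorem~\ref{Thm_Main}, is derived in the next section of the paper. Specialising to $\la=(m^r)$, every cell $s=(i,j)$ of the rectangle has arm $m-j$, leg $r-i$, co-arm $j-1$ and co-leg $i-1$; this regularity causes the product over cells to telescope through the rows and columns of the rectangle into a ratio of explicit $q$-Pochhammer symbols in the bases $q$ and $q^n$.

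Next, taking $r\to\infty$ is formal: $q$-Pochhammer symbols of the form $(a;q^n)_r$ tend to $(a;q^n)_\infty$, while the $r$-dependent $q$-power prefactor is cancelled by $q^{-m\binom{r}{2}}$. The resulting infinite product, after regrouping via $\theta(q^a;q^\kappa)=(q^a;q^\kappa)_\infty(q^{\kappa-a};q^\kappa)_\infty$, should match the first theta-product expression of the theorem. Conceptually this step amounts to a principally specialised Macdonald identity for the affine root system $\A_{n-1}^{(1)}$, with the right-hand side being, up to normalisation, a principally specialised character of an integrable highest-weight module.

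The equality of the two product sides (level-rank duality) reflects the symmetry of this identity under $m\leftrightarrow n$. It can be verified directly by computing, for each $a\in\{1,\ldots,\kappa-1\}$, the exponent of $(q^a;q^\kappa)_\infty$ on both product sides and checking that they coincide, using $\kappa=m+n$ together with the symmetry $\theta(q^a;q^\kappa)=\theta(q^{\kappa-a};q^\kappa)$.

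The principal obstacle will be the first step: obtaining a closed product formula for $P_{(m^r)}(1,q,q^2,\ldots;q^n)$ in a form transparent enough that the $r\to\infty$ limit and the subsequent regrouping into theta functions can be carried out without burying the combinatorics. The other steps are then formal consequences of classical $q$-series manipulations and the definition of the modified theta function.
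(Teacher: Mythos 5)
Your argument breaks down at its very first step: there is no closed product formula of the type you describe for $P_{(m^r)}(1,q,q^2,\dots;q^n)$ at finite $r$. The arm/leg (hook-content style) telescoping product you invoke is the Macdonald principal specialisation formula, and it applies only when the geometric progression is taken in the Hall--Littlewood parameter itself, i.e.\ to $P_{\la}(1,t,t^2,\dots;t)$; here the parameter is $q^{n}$ while the progression runs in powers of $q$, and for $n>1$ no such product evaluation is available. The remark after Theorem~\ref{Thm_Main} that you lean on does not promise a product: what Section~\ref{Sec_HL} actually provides is the combinatorial \emph{multi-sum} \eqref{Eq_comb}, coming from the Kirillov-type formula for $Q'_{\la}$. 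Indeed, the paper's own formula \eqref{Eq_Q2r} for the simplest rectangles $(2^r)$ is a multiple sum, not a product, and the author states that even this is not known for general rectangular shapes; the whole point of Theorem~\ref{Thm_Main4} is that a product emerges only after the $r\to\infty$ limit. So the ``principal obstacle'' you flag at the end is not a technical inconvenience but a missing ingredient without which the proof cannot start.

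The paper's route is quite different and worth contrasting with your plan. It proves the more general Theorem~\ref{Thm_Main5} for near-rectangular shapes $(m^r,k)$ by starting from the explicit summation formula for $Q'_{(m^r,k)}(x;q)$ of \cite[Corollary 3.2]{BW13}, re-indexing (replacing $r$ by $nr$ and shifting the summation variables), and taking $r\to\infty$ termwise; the limit is then recognised, for general unspecialised $x$, as the Weyl--Kac character formula for the level-$m$ $\A_{n-1}^{(1)}$ module with highest weight $(m-k)\La_0+k\La_1$. Only after this identification does one specialise $q\mapsto q^n$, $x_i\mapsto q^{n-i}$, which is exactly the principal specialisation \eqref{Eq_PS}, and the product side is read off from Kac's formula \eqref{Eq_Kac} rather than from any finite-$r$ evaluation. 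Your final observation about verifying the level-rank duality of the two product sides by comparing exponents of $(q^a;q^{\kappa})_{\infty}$ is fine, but it addresses the easy part; to repair the proposal you would need to replace your first step by an argument of the above type (or some other mechanism that produces the limit directly), since the finite-$r$ polynomial simply does not factor.
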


\medskip
The remainder of this paper is organised as follows.
In the next section we recall some basic definitions and facts
from the theory of Hall--Littlewood polynomials and use this
to give an alternative, combinatorial 
representation for the left-hand side of \eqref{Eq_mixed}.
Then, in Sections~\ref{Sec_Pf} and \ref{Sec_Pf2}, we prove
Theorems~\ref{Thm_Main}--\ref{Thm_Main3} and
Theorem~\ref{Thm_Main4}, respectively,
and interpret each of the theorems
from the point of view of representation theory.

\section{The Hall--Littlewood polynomials}\label{Sec_HL}

Let $\la=(\la_1,\la_2,\dots)$ be a partition \cite{Andrews76}, i.e., 
$\la_1\geq\la_2\geq\cdots$ such that only finitely-many
$\la_i>0$. The positive $\la_i$ are called the parts
of $\la$ and the number of parts, denoted $l(\la)$, is the
length of $\la$. 
The size $\abs{\la}$ of $\la$ is the sum of its parts.
The diagram of $\la$ consists of $l(\la)$ left-aligned
rows of squares such that the $i$th row contains $\la_i$ squares.
For example, the diagram of $\nu=(6,4,4,2)$ 
of length $4$ and size $16$ is
\[
\yng(6,4,4,2)
\]
The conjugate partition $\la'$ follows by transposing 
the diagram of $\la$.
For example, $\nu'=(4,4,3,3,1,1)$.
The nonnegative integers $m_i=m_i(\la)$, $i\geq 1$
give the multiplicities of parts of size $i$, so that
$\abs{\la}=\sum_i i m_i$.
It is easy to see that $m_i=\la'_i-\la'_{i+1}$.
We say that a partition is even if all its parts are even.
Note that $\la'$ is even if all multiplicities $m_i(\la)$
are even. The partition $\nu$ in our example is an even partition.
Given two partitions $\la,\mu$ we write $\mu\subseteq\la$ if the
diagram of $\mu$ is contained in the diagram of $\la$, or, equivalently,
if $\mu_i\leq \la_i$ for all $i$.
To conclude our discussion of partitions we define the 
generalised $q$-shifted factorial $b_{\la}(q)$ as
\begin{equation}\label{Eq_blambda}
b_{\la}(q)=\prod_{i\geq 1} (q)_{m_i}=\prod_{i\geq 1} (q)_{\la'_i-\la'_{i+1}}.
\end{equation}
Hence $b_{\nu}(q)=(q)_1^2(q)_2$.

For a fixed positive integer $n$, let $x=(x_1,\dots,x_n)$.
Given a partition $\la$ such that $l(\la)\leq n$,
write $x^{\la}$ for the monomial $x_1^{\la_1}\dots x_n^{\la_n}$
and define $v_{\la}(q)=\prod_{i=0}^n (q)_{m_i}/(1-q)^{m_i}$,
where $m_0:=n-l(\la)$.
The Hall--Littlewood polynomial $P_{\la}(x;q)$ 
is defined as the symmetric function \cite{Macdonald95}
\[
P_{\la}(x;q)=\frac{1}{v_{\la}(q)}
\sum_{w\in\Symm_n} 
w\bigg(x^{\la}\prod_{i<j}\frac{x_i-qx_j}{x_i-x_j}\bigg),
\]
where the symmetric group $\Symm_n$ acts on $x$ by permuting the $x_i$.
It follows from the definition that $P_{\la}(x;q)$ is
a homogeneous polynomial of degree $\abs{\la}$, a fact used repeatedly
in the rest of this paper.
$P_{\la}(x;q)$ is defined to be identically $0$ if $l(\la)>n$.
The Hall--Littlewood polynomials may be extended
in the usual way to symmetric functions in
countably-many variables, see \cite{Macdonald95}.
Below we only need this for $x$ a geometric progression.

For $x=(x_1,x_2,\dots)$ not necessarily finite, let $p_r$ be the $r$-th 
power sum symmetric function
\[
p_r(x)=x_1^r+x_2^r+\cdots,
\]
and $p_{\la}=\prod_{i\geq 1} p_{\la_i}$.
The power sums $\{p_{\la}(x_1,\dots,x_n)\}_{l(\la)\leq n}$ form a $\Rat$-basis of 
the ring of symmetric functions in $n$ variables.
If $\phi_q$ denotes the ring homomorphism $\phi_q(p_r)=p_r/(1-q^r)$,
then the modified Hall--Littlewood polynomials $P'_{\la}(x;q)$ 
are defined as the image of the $P_{\la}(x;q)$ under $\phi_q$: 
\[
P'_{\la}=\phi_q\big(P_{\la}\big).
\]

We also require the Hall--Littlewood polynomials $Q_{\la}$ and $Q'_{\la}$ 
defined by
\begin{equation}\label{Eq_QQp}
Q_{\la}(x;q)=b_{\la}(q) P_{\la}(x;q)\quad\text{and}\quad
Q'_{\la}(x;q)=b_{\la}(q) P'_{\la}(x;q).
\end{equation}
Clearly, $Q'_{\la}=\phi_q\big(Q_{\la}\big)$.

Up to the point where the $x$-variables are specialised, our proof 
of Theorems~\ref{Thm_Main}--\ref{Thm_Main3}
features the modified rather than the ordinary Hall--Littlewood polynomials.
Through specialisation we arrive at $P_{\la}$ evaluated
at a geometric progression thanks to
\begin{equation}\label{Eq_PpP}
P'_{\la}(1,q,\dots,q^{n-1};q^n)=P_{\la}(1,q,q^2,\dots;q^n),
\end{equation}
which readily follows from
\[
\phi_{q^n} \big( p_r(1,q,\dots,q^{n-1}) \big)
=\frac{1-q^{nr}}{1-q^r} \cdot \frac{1}{1-q^{nr}}
=p_r(1,q,q^2,\dots).
\]

From \cite{Kirillov00,WZ12} we may infer the following combinatorial
formula for the modified Hall--Littlewood polynomials:
\[
Q'_{\la}(x;q)=
\sum \prod_{i=1}^{\la_1}\prod_{a=1}^n
x_a^{\mu^{(a-1)}_i-\mu^{(a)}_i}
q^{\binom{\mu^{(a-1)}_i-\mu^{(a)}_i}{2}}
\qbin{\mu^{(a-1)}_i-\mu^{(a)}_{i+1}}{\mu^{(a-1)}_i-\mu^{(a)}_i}_q,
\]
where the sum is over partitions
$0=\mu^{(n)}\subseteq\cdots\subseteq\mu^{(1)}\subseteq\mu^{(0)}=\la'$
and
\[
\qbin{n}{m}_q=\begin{cases} \displaystyle \frac{(q)_{n}}{(q)_m(q)_{n-m}} &
\text{if $m\in\{0,1,\dots,n\}$} \\[3mm]
0 & \text{otherwise} \end{cases}
\]
is a $q$-binomial coefficient.
Therefore, by \eqref{Eq_blambda}--\eqref{Eq_PpP},
\begin{multline}\label{Eq_comb}
\sum_{\substack{\la \\[1pt] \la_1\leq m}} 
q^{(\sigma+1)\abs{\la}} P_{2\la}(1,q,q^2,\dots;q^n)\\
=\sum \prod_{i=1}^{2m}\Bigg\{
\frac{q^{\frac{1}{2}(\sigma+1)\mu^{(0)}_i}}
{(q^n;q^n)_{\mu^{(0)}_i-\mu^{(0)}_{i+1}}}
\prod_{a=1}^n
q^{\mu_i^{(a)}+n\binom{\mu^{(a-1)}_i-\mu^{(a)}_i}{2}}
\qbin{\mu^{(a-1)}_i-\mu^{(a)}_{i+1}}{\mu^{(a-1)}_i-\mu^{(a)}_i}_{q^n}
\Bigg\},
\end{multline}
where the sum on the right is over partitions
$0=\mu^{(n)}\subseteq\cdots\subseteq\mu^{(1)}\subseteq\mu^{(0)}$
such that $(\mu^{(0)})'$ is even and $l(\mu^{(0)})\leq 2m$.
This may be used to express the sum sides of 
\eqref{Eq_RR-A2n2}--\eqref{Eq_mixed} combinatorially.
To see that \eqref{Eq_comb} indeed generalises the 
sums in \eqref{Eq_RR} and \eqref{Eq_AG}, we note that the above
simplifies for $n=1$ to
\[
\sum_{\substack{\la \\[1pt] \la_1\leq m}} 
q^{(\sigma+1)\abs{\la}} P_{2\la}(1,q,q^2,\dots;q)
=\sum
\prod_{i=1}^{2m} \frac{q^{\frac{1}{2}\mu_i(\mu_i+\sigma)}}{(q)_{\mu_i-\mu_{i+1}}}
\]
summed on the right over partitions $\mu$ of length at most $2m$ whose conjugates are even.
Such partitions are characterised by the restriction 
$\mu_{2i}=\mu_{2i-1}=:r_i$ so that we get
\[
\sum_{\substack{\la \\[1pt] \la_1\leq m}} 
q^{(\sigma+1)\abs{\la}} P_{2\la}(1,q,q^2,\dots;q)
=\sum_{r_1\geq\cdots\geq r_m\geq 0} \prod_{i=1}^m
\frac{q^{r_i(r_i+\sigma)}}{(q)_{r_i-r_{i+1}}}
\]
in accordance with \eqref{Eq_AG}.
If instead we consider $m=1$ and replace
$\mu^{(j)}$ by $(r_j,s_j)$ for $j\geq 0$,
we find
\begin{align*}
&\sum_{r=0}^{\infty} q^{(\sigma+1)r} P_{(2^r)}(1,q,q^2,\dots;q^n)\\
&\quad=\sum
\frac{q^{(\sigma+1)r_0}}{(q^n;q^n)_{r_0}}
\prod_{j=1}^n
q^{r_j+s_j+n\binom{r_{j-1}-r_j}{2}
+n\binom{s_{j-1}-s_j}{2}}
\qbin{r_{j-1}-s_j}{r_{j-1}-r_j}_{q^n} 
\qbin{s_{j-1}}{s_j}_{q^n} \\
&\quad=\frac{(q^{n+4};q^{n+4})_{\infty}}{(q)_{\infty}}\, 
\theta\big(q^{2-\sigma};q^{n+4}\big),
\end{align*}
where the second sum is over $r_0,s_0,\dots,r_{n-1},s_{n-1}$ such that
$r_0=s_0$, and $r_n=s_n:=0$.

We conclude this section with a remark about Theorem~\ref{Thm_Main4}.
Due to the occurrence of the limit, the left-hand side does not take the
form of the usual sum-side of a Rogers--Ramanujan-type identity.
For special cases it is, however, possible to eliminate the limit.
For example, for partitions of the form $(2^r)$ we found that
\begin{multline}\label{Eq_Q2r}
P_{(2^r)}(1,q,q^2,\dots;q^{2n+\delta}) \\
=\sum_{r\geq r_1\geq\dots\geq r_n\geq 0}
\frac{q^{r^2-r+r_1^2+\cdots+r_n^2+r_1+\cdots+r_n}}
{(q)_{r-r_1}(q)_{r_1-r_2}
\cdots(q)_{r_{n-1}-r_n}(q^{2-\delta};q^{2-\delta})_{r_n}}
\end{multline}
for $\delta=0,1$.
This turns the $m=2$ case of Theorem~\ref{Thm_Main4} into
\begin{multline*}
\sum_{r_1\geq\dots\geq r_n\geq 0}
\frac{q^{r_1^2+\cdots+r_n^2+r_1+\cdots+r_n}}
{(q)_{r_1-r_2}\cdots(q)_{r_{n-1}-r_n}(q^{2-\delta};q^{2-\delta})_{r_n}} \\
=\frac{(q^{2n+2+\delta};q^{2n+2+\delta})_{\infty}}
{(q)_{\infty}}\, \theta(q;q^{2n+2+\delta}).
\end{multline*}
For $\delta=1$ this is the $i=1$ case of the
Andrews--Gordon identity \eqref{Eq_AG} (with $m$ replaced by $n$). 
For $\delta=0$ it corresponds 
to an identity due to Bressoud \cite{Bressoud80}.
We do not know how to generalise \eqref{Eq_Q2r}
to arbitrary rectangular shapes.

\section{Proof of Theorems~\ref{Thm_Main}--\ref{Thm_Main3}}\label{Sec_Pf}

\subsection{The Watson--Andrews approach}

In 1929 Watson proved the Rog\-ers--Ra\-man\-u\-jan identities \eqref{Eq_RR}
by first proving a new basic hypergeometric series transformation between
a terminating balanced $_4\phi_3$ series and a terminating 
very-well-poised $_8\phi_7$ series \cite{Watson29}
\begin{multline}\label{Eq_Watson}
\frac{(aq,aq/bc)_N}{(aq/b,aq/c)_N}
\sum_{r=0}^N \frac{(b,c,aq/de,q^{-N})_r}
{(q,aq/d,aq/e,bcq^{-N}/a)_r} \, q^r \\
=\sum_{r=0}^N \frac{1-a q^{2r}}{1-a}\, 
\frac{(a,b,c,d,e,q^{-N})_r}{(q,aq/b,aq/c,aq/d,aq/e)_r}
\bigg(\frac{a^2q^{N+2}}{bcde}\bigg)^r .
\end{multline} 
Here $a,b,c,d,e$ are indeterminates, $N$ is a nonnegative integer
and
\[
(a_1,\dots,a_m)_k=(a_1,\dots,q_m;q)=(a_1;q)_k\cdots (a_m;q)_k.
\]
By letting $b,c,d,e$ tend to infinity and taking the
nonterminating limit $N\to\infty$,
Watson arrived at what is known as the Rogers--Selberg 
identity \cite{Rogers19,Selberg36}\footnote{Here and elsewhere in the paper
we ignore questions of convergence. From an analytic point of view 
the transition from \eqref{Eq_Watson} to \eqref{Eq_RS} requires
the use of the dominated convergence theorem, imposing the 
restriction $\abs{q}<1$ on the Rogers--Selberg identity.
We however choose to view this identity as an identity between 
formal power series in $q$,
in line with the combinatorial and representation-theoretic 
interpretations of Rogers--Ramanujan-type identities.} 
\begin{equation}\label{Eq_RS}
\sum_{r=0}^{\infty} \frac{a^r q^{r^2}}{(q)_r} \\
=\frac{1}{(aq)_{\infty}} \sum_{r=0}^{\infty}
\frac{1-a q^{2r}}{1-a}\,\frac{(a)_r}{(q)_r}\,
(-1)^r a^{2r} q^{5\binom{r}{2}+2r}.
\end{equation}
For $a=1$ or $a=q$ the sum on the right can be expressed in product-form 
by the Jacobi triple-product identity
\[
\sum_{r=-\infty}^{\infty} (-1)^r x^r q^{\binom{r}{2}}
=(q)_{\infty}\,\theta(x;q),
\]
resulting in \eqref{Eq_RR}.

Almost 50 years after Watson's work, Andrews showed that the 
Andrews--Gordon identities \eqref{Eq_AG} for $i=1$ and $i=m+1$ follow in 
much the same manner from a multiple series generalisation of 
\eqref{Eq_Watson} in which the $_8\phi_7$ series on the right is replaced 
by a terminating very-well-poised $_{2m+6}\phi_{2m+5}$ series depending 
on $2m+2$ parameters instead of $b,c,d,e$ \cite{Andrews75}.
Again the key steps are to let all these parameters
tend to infinity, to take the nonterminating limit 
and express the $a=1$ or $a=q$ instances of the resulting
sum as a product by the Jacobi triple-product identity.

Recently, in joint work with Bartlett, we obtained an analogue of
Andrews' multiple series transformation for the $\C_n$ root system 
\cite[Theorem 4.2]{BW13}. 
Apart from the variables $(x_1,\dots,x_n)$---which play the role of
$a$ in \eqref{Eq_Watson}, and are related to the underlying root 
system---the $\C_n$ Andrews transformation again contains $2m+2$
parameters.
Unfortunately, simply following the Andrews--Watson procedure
is no longer sufficient. In \cite{Milne94} Milne already 
obtained the $\C_n$ analogue of the Rogers--Selberg identity 
\eqref{Eq_RS} (the $m=1$ case of \eqref{Eq_Cn-RS-m} below) and 
considered specialisations along the lines of Andrews and Watson. 
Only for $\C_2$ did this result in a Rogers--Ramanujan-type identity:
the modulus $6$ case of \eqref{Eq_RR-Dn} mentioned previously.

The initial two steps towards proof of \eqref{Eq_RR-A2n2}--\eqref{Eq_mixed},
however, are the same as those of Watson and Andrews:
we let all $2m+2$ parameters in 
the $\C_n$ Andrews transformation tend to infinity and take the 
nonterminating limit. Then, as shown in \cite{BW13}, the right-hand side 
can be expressed in terms of modified Hall--Littlewood polynomials, 
resulting in the level-$m$ $\C_n$ Rogers--Selberg identity
\begin{equation}\label{Eq_Cn-RS-m}
\sum_{\substack{\la \\[1pt] \la_1\leq m}}
q^{\abs{\la}} P'_{2\la}(x;q)=L^{(0)}(x;q)
\end{equation}
for 
\begin{multline*}
L^{(0)}_m(x;q) :=
\sum_{r\in\NN^n}\frac{\Delta_{\C}(x q^r)}{\Delta_{\C}(x)}\,
\prod_{i=1}^n x_i^{2(m+1)r_i} q^{(m+1)r_i^2+n\binom{r_i}{2}} \\
\times \prod_{i,j=1}^n \Big({-}\frac{x_i}{x_j}\Big)^{r_i} 
\frac{(x_i x_j)_{r_i}}{(q x_i/x_j)_{r_i}}.
\end{multline*}
Here
\[
\Delta_{\C}(x):=\prod_{i=1}^n (1-x_i^2)\prod_{1\leq i<j\leq n}
(x_i-x_j)(x_ix_j-1)
\]
is the $\C_n$ Vandermonde product and $f(xq^r)$ is short\-hand for 
$f(x_1q^{r_1},\dots,x_nq^{r_n})$.
As mentioned previously, \eqref{Eq_Cn-RS-m} for $m=1$ is Milne's 
$\C_n$ Rogers--Selberg formula \cite[Corollary 2.21]{Milne94}.

Comparing the left-hand side of \eqref{Eq_Cn-RS-m} with that of 
\eqref{Eq_RR-A2n2}--\eqref{Eq_RR-Dn} it follows that we should 
make the simultaneous substitutions
\begin{equation}\label{Eq_spec}
q\mapsto q^n,\qquad x_i\mapsto q^{(n+\sigma+1)/2-i}\;\; (1\leq i\leq n).
\end{equation}
Then, by the homogeneity and symmetry of the (modified) Hall--Littlewood 
polynomials and \eqref{Eq_PpP},
\[
\sum_{\substack{\la \\[1pt] \la_1\leq m}}
q^{\abs{\la}} P'_{2\la}(x;q)  \longmapsto
\sum_{\substack{\la \\[1pt] \la_1\leq m}}
q^{(\sigma+1)\abs{\la}} P_{2\la}(1,q,q^2,\dots;q^n).
\]
The problem we face is that making the substitution \eqref{Eq_spec} 
on the right-hand side of \eqref{Eq_Cn-RS-m} 
and then writing the resulting $q$-series
in product form is very difficult.
To get around this problem, we take a rather different route and 
(up to a small constant) first double the rank of the 
underlying $\C_n$ root system
and then take a limit in which products of pairs of $x$-variables tend 
to one. To do so we require another result from \cite{BW13}.

First we need to extend our earlier definition of the $q$-shifted
factorial to $(a)_k=(a)_{\infty}/(aq^k)_{\infty}$.
Importantly, $1/(q)_k=0$ for $k$ a negative integer. Then,
for $x=(x_1,\dots,x_n)$, $p$ an integer such that $0\leq p\leq n$ and 
$r\in\Z^n$,
\begin{multline}\label{Eq_Def-Lp}
L^{(p)}_m(x;q):=
\sum_{r\in\Z^n} \frac{\Delta_{\C}(x q^r)}{\Delta_{\C}(x)}\,
\prod_{i=1}^n x_i^{2(m+p+1)r_i}
q^{(m+1)r_i^2+(n+p)\binom{r_i}{2}} \\
\times
\prod_{i=1}^n \prod_{j=p+1}^n
\Big({-}\frac{x_i}{x_j}\Big)^{r_i} 
\frac{(x_i x_j)_{r_i}}{(qx_i/x_j)_{r_i}}.
\end{multline}
Note that the summand of $L^{(p)}_m(x;q)$ vanishes 
if one of $r_{p+1},\dots,r_n<0$.
\begin{lemma}[\!\!{\cite[Lemma~A.1]{BW13}}]
For $1\leq p\leq n-1$,
\begin{equation}\label{Eq_key}
\lim_{x_{p+1}\to x_p^{-1}} L^{(p-1)}_m(x;q)=
L^{(p)}_m(x_1,\dots,x_{p-1},x_{p+1},\dots,x_n;q).
\end{equation}
\end{lemma}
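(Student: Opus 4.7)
The plan is to analyse the summand of $L^{(p-1)}_m(x;q)$ near $y:=x_px_{p+1}=1$, show that almost all of the double sum over $(r_p,r_{p+1})\in\Z_{\ge 0}^2$ drops out, and finally use the $\C_n$ Weyl symmetry to glue the two surviving branches into a single sum that matches $L^{(p)}_m(x_1,\dots,x_{p-1},x_{p+1},\dots,x_n;q)$. The constraints $r_p,r_{p+1}\ge 0$ are built into $L^{(p-1)}_m$: they come from the factors $1/(q;q)_{r_p}$ and $1/(q;q)_{r_{p+1}}$ produced by the diagonal $(i,j)$ with $i=j\in\{p,p+1\}$ in the product $\prod_{i=1}^{n}\prod_{j=p}^{n}$.

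The only factor of the summand singular at $y=1$ is $(1-x_px_{p+1})^{-1}$ from $\Delta_\C(x)^{-1}$; the only factors that can vanish at $y=1$ are $(y;q)_{r_p}$ from the $(i,j)=(p,p+1)$ slot, $(y;q)_{r_{p+1}}$ from the $(i,j)=(p+1,p)$ slot, and the $\Delta_\C(xq^r)$ numerator factor $(x_px_{p+1}q^{r_p+r_{p+1}}-1)=(yq^{r_p+r_{p+1}}-1)$. I would group the dangerous factors into
\[
\frac{yq^{r_p+r_{p+1}}-1}{y-1}\,(y;q)_{r_p}\,(y;q)_{r_{p+1}}
\]
and verify that its $y\to 1$ limit equals $1$ at $(r_p,r_{p+1})=(0,0)$, equals $(q;q)_{r_p}$ when $r_{p+1}=0$ and $r_p\ge 1$, equals $(q;q)_{r_{p+1}}$ when $r_p=0$ and $r_{p+1}\ge 1$, and equals $0$ whenever both indices are $\ge 1$. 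Hence only the ``L-shape'' $\{r_{p+1}=0\}\cup\{r_p=0\}$ survives, and on each leg one of the $(i,j)=(p+1,p)$ or $(p,p+1)$ slots trivialises while the other, after combining with the residual Vandermonde pole, gives a plain $(q;q)_{r_p}$ or $(q;q)_{r_{p+1}}$.

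To assemble the result I would use the $\C_n$ Weyl reflection $(x_p,r_p)\leftrightarrow(x_p^{-1},-r_p)$, which on the limit surface $x_px_{p+1}=1$ coincides with $x_p\leftrightarrow x_{p+1}$. Applying it to the branch $r_p=0,\,r_{p+1}\ge 1$ and relabelling $r_p':=-r_{p+1}$ turns that branch into a sum over $r_p'\le -1$ in which every $x_p$ has been replaced by $x_{p+1}$; concatenating with the branch $r_{p+1}=0,\,r_p\ge 0$ (re-indexed by $r_p':=r_p$) then yields a single sum over $r_p'\in\Z$ in the variables $(x_1,\dots,x_{p-1},x_{p+1},\dots,x_n)$. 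The main obstacle that remains is the bookkeeping: one has to check that the exponents of $q$ agree (this is essentially automatic because $(n+p-1)\binom{r_i}{2}+(m+1)r_i^2$ is common to both sides) and, more delicately, that the missing power $x_i^{2r_i}$ between the $L^{(p-1)}_m$ weight $2(m+p)r_i$ and the $L^{(p)}_m$ weight $2(m+p+1)r_i$ is supplied exactly by the residual $j=p$ factors $\prod_{i\ne p,p+1}(-x_i/x_p)^{r_i}(x_ix_p;q)_{r_i}/(qx_i/x_p;q)_{r_i}$ surviving the limit, after the Weyl reflection has been applied on the second branch.
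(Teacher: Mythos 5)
Your proposal follows what is in fact the strategy of the cited proof (the paper itself offers no argument for this lemma, quoting it from \cite[Lemma A.1]{BW13}): push the limit through the sum, show the limiting summand vanishes off the set $\{r_p=0\}\cup\{r_{p+1}=0\}$, and glue the two surviving branches into one bilateral sum. Your first half is correct and well executed: at $y:=x_px_{p+1}=1$ the only singular factor is $1/(y-1)$ from $\Delta_{\C}(x)^{-1}$, the only vanishing ones are $(y;q)_{r_p}$, $(y;q)_{r_{p+1}}$ and $yq^{r_p+r_{p+1}}-1$, and your grouped limit (equal to $1$, $(q)_{r_p}$, $(q)_{r_{p+1}}$ or $0$ in the four cases) is right, so only the L-shaped region survives.

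The gap is in the reassembly, which you defer to ``bookkeeping'' even though it is the substantive half of the proof, and the two concrete claims you make about it do not hold as stated. First, the branch-to-halfline assignment is backwards for the lemma as stated (merged variable $x_{p+1}$): the branch $r_p=0$, $r_{p+1}=s\geq 1$ already carries its index on $x_{p+1}$ and matches the summand of $L^{(p)}_m(x_1,\dots,x_{p-1},x_{p+1},\dots,x_n;q)$ at index $+s$ with no reflection, whereas the branch $r_{p+1}=0$, $r_p=r\geq 0$ is the one that must be reflected and lands on index $-r$. A rank-two check makes this explicit: for $n=2$, $p=1$ the limit of the $(r_1,r_2)=(r,0)$ summand is $\frac{1-x_1^2q^{2r}}{1-x_1^2}\,x_1^{2(m+2)r}q^{(m+2)r^2-r}$, which is the summand of $L^{(1)}_m(x_1^{-1};q)$ at index $-r$, not $+r$. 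As written, your gluing would produce the identity with $x_p$ retained rather than $x_{p+1}$; repairing it means either swapping the roles of the two branches or proving that $L^{(p)}_m$ is invariant under inversion of its $p$-th argument, neither of which you address (and the latter is not obvious because of the asymmetric factors $\prod_{j=p+1}^{n}$). Second, the claim that the $q$-exponent matching is ``essentially automatic'' fails precisely on the branch where the reflection is needed: $\binom{-r}{2}=\binom{r+1}{2}$, so the quadratic exponent alone shifts by $q^{(n+p-1)r}$, and this must be cancelled against powers of $q$ harvested from the $\C$-type Vandermonde cross factors and the residual Pochhammer slots with $j\in\{p,p+1\}$; likewise the promotion of the weight $2(m+p)r_i$ to $2(m+p+1)r_i$ uses not only the spectator slots you mention but also the diagonal and cross slots at positions $p$, $p+1$ (in the rank-two check it is $(x_1^2)_r/(qx_1^2)_r$ combining with a Vandermonde cross term that produces the clean factor $\frac{1-x_1^2q^{2r}}{1-x_1^2}$). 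Until this summand-by-summand matching is actually carried out on both branches, the lemma is not proved; the vanishing analysis alone is the easy half.
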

This will be the key to the proof of all four generalised 
Rogers--Ramanujan identities, although the level of difficulty 
varies considerably from case to case.
We begin with the simplest proof, that of \eqref{Eq_RR-Cn}.

\subsection{Proof of the \eqref{Eq_RR-Cn}} 

By iterating \eqref{Eq_key} we obtain
\[
\lim_{y_1\to x_1^{-1}}\dots \lim_{y_n\to x_n^{-1}}
L^{(0)}_m(x_1,y_1,\dots,x_n,y_n)=L^{(n)}_m(x_1,\dots,x_n).
\]
Hence, after replacing $x\mapsto(x_1,y_1,\dots,x_n,y_n)$ in 
\eqref{Eq_Cn-RS-m} (which corresponds to the doubling of the rank
mentioned previously) and taking the $y_i\to x_i^{-1}$ limit 
for $1\leq i\leq n$, we find
\begin{multline}\label{Eq_CnmLa0}
\sum_{\substack{\la \\[1pt] \la_1\leq m}}
q^{\abs{\la}} P'_{2\la}(x^{\pm};q)=
\frac{1}{(q)_{\infty}^n\prod_{i=1}^n \theta(x_i^2;q)
\prod_{1\leq i<j\leq n} \theta(x_i/x_j,x_ix_j;q)} \\ \times
\sum_{r\in\Z^n} \Delta_{\C}(x q^r)
\prod_{i=1}^n x_i^{\kappa r_i-i+1} q^{\frac{1}{2}\kappa r_i^2-nr_i},
\end{multline}
where $\kappa=2m+2n+2$ and $f(x^{\pm})=f(x_1,x_1^{-1},\dots,x_n,x_n^{-1})$.
Next we make the simultaneous substitutions
\begin{equation}\label{Eq_subs}
q\mapsto q^{2n},\qquad x_i\mapsto q^{n-i+1/2}=:\hat{x}_i\;\; (1\leq i\leq n),
\end{equation}
which corresponds to \eqref{Eq_spec} with $(n,\sigma)\mapsto (2n,0)$.
By 
\[
(q^{2n};q^{2n})_{\infty}^n\prod_{i=1}^n \theta(q^{2n-2i+1};q^{2n})
\prod_{1\leq i<j\leq n} \theta(q^{j-i},q^{2n-i-j+1};q^{2n})=
\frac{(q)_{\infty}^{n+1}}{(q^2;q^2)_{\infty}},
\]
and
\begin{align*}
q^{2n\abs{\la}} & P'_{2\la}(q^{n-1/2},q^{1/2-n},\dots,q^{1/2},q^{-1/2};q^{2n}) && \\
&=q^{2n\abs{\la}} P'_{2\la}(q^{1/2-n},q^{3/2-n},\dots,q^{n-1/2};q^{2n}) 
&& \text{by symmetry} \\
&=q^{\abs{\la}} P'_{2\la}(1,q,\dots,q^{2n-1};q^{2n}) && \text{by homogeneity} \\ 
&=q^{\abs{\la}} P_{2\la}(1,q,q^2,\dots;q^{2n}) && \text{by \eqref{Eq_PpP}},
\end{align*}
this yields
\begin{equation}\label{Eq_Mdef}
\sum_{\substack{\la \\[1pt] \la_1\leq m}}
q^{\abs{\la}} P_{2\la}\big(1,q,q^2,\dots;q^{2n}\big) =
\frac{(q^2;q^2)_{\infty}}{(q)_{\infty}^{n+1}}\,
\M,
\end{equation}
where 
\[
\M:=
\sum_{r\in\Z^n} \Delta_{\C}(\hat{x} q^{2nr})\,
\prod_{i=1}^n \hat{x}_i^{\kappa r_i-i+1} q^{nr_i^2-2n^2 r_i}.
\]

What remains is to express $\M$ in product form. 
As a first step we use the $\C_n$ Weyl denominator
formula \cite[Lemma 2]{Krattenthaler99}
\begin{equation}\label{Eq_detC}
\Delta_{\C}(x)=\det_{1\leq i,j\leq n} \big(x_i^{j-1}-x_i^{2n-j+1}\big)
\end{equation}
as well as multilinearity, to write $\M$ as
\begin{equation}\label{Eq_begin}
\M=\det_{1\leq i,j\leq n} 
\bigg(\sum_{r\in\Z} \hat{x}_i^{\kappa r-i+1}
q^{n\kappa r^2-2n^2r}
\Big( (\hat{x}_i q^{2nr})^{j-1}-(\hat{x}_iq^{2nr})^{2n-j+1}\Big)\bigg).
\end{equation}
We now replace $(i,j)\mapsto (n-j+1,n-i+1)$ and, viewing the
resulting determinant as being of the form 
$\det\big(\sum_r u_{ij;r}-\sum_r v_{ij;r}\big)$, we change the summation index
$r\mapsto -r-1$ in the sum over $v_{ij;r}$. Then
\begin{equation}\label{Eq_end}
\M=\det_{1\leq i,j\leq n}\bigg(
q^{a_{ij}} \sum_{r\in\Z} y_i^{2nr-i+1}
q^{2n\kappa\binom{r}{2}+\frac{1}{2}\kappa r} 
\Big((y_iq^{\kappa r})^{j-1}-(y_i q^{\kappa r})^{2n-j}\Big)\bigg),
\end{equation}
where $y_i=q^{\kappa/2-i}$ and 
$a_{ij}=j^2-i^2+(i-j)(\kappa+1)/2$.
Since the factor $q^{a_{ij}}$ does not contribute to the
determinant, we can apply the $\B_n$ Weyl denominator formula 
\cite{Krattenthaler99}
\begin{equation}\label{Eq_Bn-VdM}
\det_{1\leq i,j\leq n} \big(x_i^{j-1}-x_i^{2n-j}\big)=
\prod_{i=1}^n (1-x_i)
\prod_{1\leq i<j\leq n} (x_i-x_j)(x_ix_j-1)=:\Delta_{\B}(x)
\end{equation}
to obtain
\[
\M=\sum_{r\in\Z^n} \Delta_{\B}(y q^{\kappa r})
\prod_{i=1}^n y_i^{2nr_i-i+1} 
q^{2n\kappa\binom{r_i}{2}+\frac{1}{2}\kappa r_i}.
\]
By the $\D_{n+1}^{(2)}$ Macdonald identity \cite{Macdonald72}
\begin{multline*}
\sum_{r\in\Z^n} \Delta_{\B}(xq^r)
\prod_{i=1}^n x_i^{2nr_i-i+1} 
q^{2n\binom{r_i}{2}+\frac{1}{2}r_i} \\
=(q^{1/2};q^{1/2})_{\infty}(q)_{\infty}^{n-1}
\prod_{i=1}^n \theta(x_i;q^{1/2})_{\infty}
\prod_{1\leq i<j\leq n} \theta(x_i/x_j,x_ix_j;q)
\end{multline*}
with $(q,x)\mapsto (q^{\kappa},y)$ this results in
\begin{equation}\label{Eq_Mcn}
\M=(q^{\kappa/2};q^{\kappa/2})_{\infty}
(q^{\kappa};q^{\kappa})_{\infty}^{n-1}
 \prod_{i=1}^n  \theta\big(q^i;q^{\kappa/2}\big)
\prod_{1\leq i<j\leq n} \theta\big(q^{j-i},q^{i+j};q^{\kappa}\big),
\end{equation}
where we have also used the simple symmetry 
$\theta(q^{a-b};q^a)=\theta(q^b;q^a)$.
Substituting \eqref{Eq_Mcn} into \eqref{Eq_Mdef} proves the first equality 
of \eqref{Eq_RR-Cn}.

To show that the second equality holds is a straightforward
exercise in manipulating infinite products, and we omit the details.

\medskip

There is a somewhat different approach to \eqref{Eq_RR-Cn} based on
the representation theory of the affine Kac--Moody algebra 
$\C_n^{(1)}$ \cite{Kac90}. 
Let $I=\{0,1,\dots,n\}$, and
$\alpha_i$, $\alpha^{\vee}_i$ and $\La_i$ for $i\in I$
the simple roots, simple coroots and fundamental weights
of $\C_n^{(1)}$.
Let $\ip{\cdot}{\cdot}$ denote the usual pairing between the Cartan
subalgebra $\hfrak$ and its dual $\hfrak^{\ast}$, 
so that $\ip{\La_i}{\alpha_j^{\vee}}=\delta_{ij}$.
Finally, let $V(\La)$ be the integrable highest-weight module of 
$\C_n^{(1)}$ of highest weight $\La$ with character $\ch V(\La)$.  

The homomorphism
\begin{equation}\label{Eq_PS}
F_{\mathds{1}}:~\mathbb{C}[[\eup^{-\alpha_0},\dots,\eup^{-\alpha_n}]]
\to \mathbb{C}[[q]],\qquad 
F_{\mathds{1}}(\eup^{-\alpha_i})=q\quad\text{for all $i\in I$}
\end{equation}
is known as principal specialisation.
In \cite{Kac74} Kac showed that the principally specialised characters
admit a product form.
Let $\rho$ be the Weyl vector (that is $\ip{\rho}{\alpha_i^{\vee}}=1$
for $i\in I$) and $\mult(\alpha)$ the multiplicity of $\alpha$.
Then Kac's formula is given by
\begin{equation}\label{Eq_Kac}
F_{\mathbbm{1}}\big(\eup^{-\La} \ch V(\La)\big)=\prod_{\alpha\in\Delta_+^{\vee}}
\bigg(\frac{1-q^{\ip{\La+\rho}{\alpha}}}
{1-q^{\ip{\rho}{\alpha}}}\bigg)^{\mult(\alpha)},
\end{equation}
where $\Delta_+^{\vee}$ is the set of positive coroots.
This result, which is valid for all types $\mathrm{X}_N^{(r)}$, 
can be rewritten in terms of theta functions.
Assuming $\C_n^{(1)}$ and setting
\[
\La=(\la_0-\la_1)\La_0+(\la_1-\la_2)\La_1+\cdots+(\la_{n-1}-\la_n)\La_{n-1}+\la_n\La_n,
\] 
for $\la=(\la_0,\la_1,\dots,\la_n)$ a partition, this rewriting takes the form
\begin{multline}\label{Eq_Kac2}
F_{\mathbbm{1}}\big(
\eup^{-\La} \ch V(\La)\big)
=\frac{(q^2;q^2)_{\infty}(q^{\kappa/2};q^{\kappa/2})_{\infty}
(q^{\kappa};q^{\kappa})_{\infty}^{n-1}} 
{(q;q)_{\infty}^{n+1}}  \\ \times
\prod_{i=1}^n  \theta\big(q^{\la_i+n-i+1};q^{\kappa/2}\big) 
\prod_{1\leq i<j\leq n} 
\theta\big(q^{\la_i-\la_j-i+j},q^{\la_i+\la_j+2n+2-i-j};q^{\kappa}\big),
\end{multline}
where $\kappa=2n+2\la_0+2$.

The earlier product form now arises by recognising (see e.g.,
\cite[Lemma 2.1]{BW13}) the right-hand side of \eqref{Eq_CnmLa0} as
\[
\eup^{-m\La_0} \ch V(m\La_0)
\]
upon the identification
\[
q=\eup^{-\alpha_0-2\alpha_1-\cdots-2\alpha_{n-1}-\alpha_n}
\quad\text{and}\quad
x_i=\eup^{-\alpha_i-\cdots-\alpha_{n-1}-\alpha_n/2}\;\;
(1\leq i\leq n).
\]
Since \eqref{Eq_subs} corresponds exactly
to the principal specialisation \eqref{Eq_PS}
it follows from \eqref{Eq_Kac2} with $\la=(m,0^n)$ that
\begin{multline*}
F_{\mathbbm{1}}\big(\eup^{-m\La_0} \ch V(m\La_0)\big) 
=
\frac{(q^2;q^2)_{\infty}(q^{\kappa/2};q^{\kappa/2})_{\infty}
(q^{\kappa};q^{\kappa})_{\infty}^{n-1}} 
{(q;q)_{\infty}^{n+1}}   \\ \times
\prod_{i=1}^n  \theta\big(q^{n-i+1};q^{\kappa/2}\big) 
\prod_{1\leq i<j\leq n} 
\theta\big(q^{j-i},q^{i+j};q^{\kappa}\big).
\end{multline*}
We should remark that this representation-theoretic approach 
is not essentially different from our earlier $q$-series 
proof. Indeed, the principal specialisation formula \eqref{Eq_Kac2} 
itself is an immediate consequence of the $\D^{(2)}_{n+1}$ Macdonald 
identity, and if, instead of the right-hand side of
\eqref{Eq_CnmLa0}, we consider the more general
\begin{multline*}
\eup^{-\La} \ch V(\La)=
\frac{1}{(q)_{\infty}^n\prod_{i=1}^n \theta(x_i^2;q)
\prod_{1\leq i<j\leq n} \theta(x_i/x_j,x_ix_j;q)} \\ \times
\sum_{r\in\Z^n} 
\det_{1\leq i,j\leq n}
\Big( (x_i q^{r_i})^{j-\la_i-1}-(x_i q^{r_i})^{2n-j+\la_i+1}\Big)
\prod_{i=1}^n x_i^{\kappa r_i+\la_i-i+1} q^{\frac{1}{2}\kappa r_i^2-nr_i}
\end{multline*}
for $\kappa=2n+2\la_0+2$, then all of the steps carried out between 
\eqref{Eq_CnmLa0} and \eqref{Eq_Mcn} carry over to this more
general setting. The only notable changes are that
\eqref{Eq_begin} generalises to
\begin{multline*}
\M=\det_{1\leq i,j\leq n} 
\bigg(\sum_{r\in\Z} \hat{x}_i^{\kappa r+\la_i-i+1}
q^{n\kappa r^2-2n^2r} \\ \times
\Big( (\hat{x}_i q^{2nr})^{j-\la_i-1}-(\hat{x}_iq^{2nr})^{2n-j+\la_i+1}\Big)
\bigg).
\end{multline*}
and that in \eqref{Eq_end} we have to redefine
$y_i$ and $a_{ij}$ as $q^{\kappa/2-\la_{n-i+1}-i}$ and 
$j^2-i^2+(i-j)(\kappa+1)/2+(j-1/2)\la_{n-j+1}-(i-1/2)\la_{n-i+1}$.

\subsection{Proof of the \eqref{Eq_RR-A2n2a}}

Again we iterate \eqref{Eq_key}, but this time the variable $x_n$, remains
unpaired: 
\[
\lim_{y_1\to x_1^{-1}}\dots \lim_{y_{n-1}\to x_{n-1}^{-1}}
L^{(0)}_m(x_1,y_1,\dots,x_{n-1},y_{n-1},x_n)=L^{(n-1)}_m(x_1,\dots,x_n).
\]
Therefore, if we replace $x\mapsto(x_1,y_1,\dots,x_{n-1},y_{n-1},x_n)$ 
in \eqref{Eq_Cn-RS-m} (changing the rank from $n$ to $2n-1$) 
and take the $y_i\to x_i^{-1}$ limit for $1\leq i\leq n-1$, we obtain
\begin{align}\label{Eq_interm}
\sum_{\substack{\la \\[1pt] \la_1\leq m}} &
q^{\abs{\la}} P'_{2\la}\big(x_1^{\pm},\dots,x_{n-1}^{\pm},x_n;q\big) \\[-1mm]
&=\frac{1}{(q)_{\infty}^{n-1}(qx_n^2)_{\infty} 
\prod_{i=1}^{n-1}  (qx_i^{\pm}x_n,qx_i^{\pm 2})_{\infty}
\prod_{1\leq i<j\leq n-1} (qx_i^{\pm} x_j^{\pm})_{\infty}} \notag \\[1mm] 
&\qquad \times
\sum_{r\in\Z^n} \frac{\Delta_{\C}(x q^r)}{\Delta_{\C}(x)}\,
\prod_{i=1}^n \bigg({-}\frac{x_i^{\kappa}}{x_n}\bigg)^{r_i} 
q^{\frac{1}{2}\kappa r_i^2-\frac{1}{2}(2n-1)r_i} 
\frac{(x_i x_n)_{r_i}}{(qx_i/x_n)_{r_i}}, \notag
\end{align}
where $\kappa=2m+2n+1$, 
$(ax_i^{\pm})_{\infty}:=(ax_i)_{\infty}(ax_i^{-1})_{\infty}$ and
\[
(ax_i^{\pm}x_j^{\pm})_{\infty}:=
(ax_ix_j)_{\infty}(ax_i^{-1}x_j)_{\infty}
(ax_ix_j^{-1})_{\infty}(ax_i^{-1}x_j^{-1})_{\infty}.
\]
Recalling the comment immediately after \eqref{Eq_Def-Lp},
the summand of \eqref{Eq_interm} vanishes unless $r_n\geq 0$.

Let $\hat{x}:=(-x_1,\dots,-x_{n-1},-1)$ and
\begin{equation}\label{Eq_phi}
\phi_r=\begin{cases} 1 & \text{if $r=0$} \\ 2 & \text{if $r=1,2,\dots$.}
\end{cases}
\end{equation}
Letting $x_n$ tend to $1$ in \eqref{Eq_interm} using
\[
\lim_{x_n\to 1} 
\frac{\Delta_{\C}(x q^r)}{\Delta_{\C}(x)}\,
\prod_{i=1}^n \frac{(x_ix_n)_{r_i}}{(qx_i/x_n)_{r_i}}
=\phi_{r_n} 
\frac{\Delta_{\B}(\hat{x} q^r)}{\Delta_{\B}(\hat{x})},
\]
we find
\begin{align*}
\sum_{\substack{\la \\[1pt] \la_1\leq m}} &
q^{\abs{\la}} P'_{2\la}\big(x_1^{\pm},\dots,x_{n-1}^{\pm},1;q\big) \\[-1mm] 
&=\frac{1}{(q)_{\infty}^n 
\prod_{i=1}^{n-1}  (qx_i^{\pm},qx_i^{\pm 2})_{\infty}
\prod_{1\leq i<j\leq n-1} (qx_i^{\pm} x_j^{\pm})_{\infty}} \\[1mm] 
& \qquad \times
\sum_{r_1,\dots,r_{n-1}=-\infty}^{\infty} \sum_{r_n=0}^{\infty}
\phi_{r_n} \frac{\Delta_{\B}(\hat{x} q^r)}{\Delta_{\B}(\hat{x})}\,
\prod_{i=1}^n \hat{x}_i^{\kappa r_i} 
q^{\frac{1}{2}\kappa r_i^2-\frac{1}{2}(2n-1)r_i}.
\end{align*}
It is easily checked that the summand on the right (without the factor
$\phi_{r_n}$) is invariant under the variable change
$r_n\mapsto -r_n$. Using the elementary relations
\begin{equation}\label{Eq_simp}
\theta(-1;q)=2(-q)_{\infty}^2,\quad
(-q)_{\infty}(q;q^2)_{\infty}=1,\quad
\theta(z,-z;q)\theta(qz^2;q^2)=\theta(z^2),
\end{equation}
we can thus simplify the above to
\begin{align}\label{Eq_a2n2}
\sum_{\substack{\la \\[1pt] \la_1\leq m}} &
q^{\abs{\la}} P'_{2\la}\big(x_1^{\pm},\dots,x_{n-1}^{\pm},1;q\big) \\[-1mm] 
&=\frac{1}{(q)_{\infty}^n 
\prod_{i=1}^n \theta(\hat{x}_i;q)\theta(q\hat{x}_i^2;q^2)
\prod_{1\leq i<j\leq n} 
\theta(\hat{x}_i/\hat{x}_j,\hat{x}_i\hat{x}_j;q)} \notag \\[1mm] 
& \qquad \times
\sum_{r\in\Z^n} \Delta_{\B}(\hat{x} q^r)\,
\prod_{i=1}^n \hat{x}_i^{\kappa r_i-i+1} 
q^{\frac{1}{2}\kappa r_i^2-\frac{1}{2}(2n-1)r_i}. \notag 
\end{align}

The remainder of the proof is similar to that of \eqref{Eq_RR-Cn}.
We make the simultaneous substitutions
\begin{equation}\label{Eq_A2n2-spec}
q\mapsto q^{2n-1},\qquad x_i\mapsto q^{n-i}\;\; (1\leq i\leq n),
\end{equation}
so that from here on $\hat{x}_i:=-q^{n-i}$. 
By 
\begin{multline*}
(q^{2n-1};q^{2n-1})_{\infty}^n\prod_{i=1}^n 
\theta(-q^{n-i};q^{2n-1})\theta(q^{2n-2i+1};q^{4n-2}) \\ \times
\prod_{1\leq i<j\leq n} \theta(q^{j-i},q^{2n-i-j};q^{2n-1})=
2(q)_{\infty}^n
\end{multline*}
and \eqref{Eq_PpP}, this results in
\[
\sum_{\substack{\la \\[1pt] \la_1\leq m}}
q^{\abs{\la}} P_{2\la}\big(1,q,q^2,\dots;q^{2n-1}\big) =
\frac{\M}{2(q)_{\infty}^n}, 
\] 
for
\[
\M:=\sum_{r\in\Z^n} \Delta_{\B}\big(\hat{x} q^{(2n-1)r}\big)\,
\prod_{i=1}^n \hat{x}_i^{\kappa r_i-i+1}
q^{\frac{1}{2}(2n-1)\kappa r_i^2-\frac{1}{2}(2n-1)^2r_i}.
\]
By \eqref{Eq_Bn-VdM} and multilinearity $\M$ can be rewritten in the form
\begin{multline*}
\M=\det_{1\leq i,j\leq n} 
\bigg(\sum_{r\in\Z} \hat{x}_i^{\kappa r-i+1}
q^{\frac{1}{2}(2n-1)\kappa r^2-\frac{1}{2}(2n-1)^2r} \\ \times
\Big( \big(\hat{x}_i q^{(2n-1)r}\big)^{j-1}-
\big(\hat{x}_iq^{(2n-1)r}\big)^{2n-j}\Big)\bigg).
\end{multline*}
Following the same steps that led from \eqref{Eq_begin} to \eqref{Eq_end} 
yields
\begin{multline}\label{Eq_same}
\M=\det_{1\leq i,j\leq n}\bigg(
(-1)^{i-j} q^{b_{ij}} \sum_{r\in\Z} (-1)^r 
y_i^{(2n-1)r-i+1} q^{(2n-1)\kappa\binom{r}{2}}  \\ \times
\Big((y_iq^{\kappa r})^{j-1}-(y_i q^{\kappa r})^{2n-j}\Big)\bigg),
\end{multline}
where 
\begin{equation}\label{Eq_yb}
y_i=q^{\frac{1}{2}(\kappa+1)-i} \quad\text{and}\quad 
b_{ij}:=j^2-i^2+\frac{1}{2}(i-j)(\kappa+3).
\end{equation} 
Again the factor $(-1)^{i-j} q^{b_{ij}}$ does not contribute
and application of \eqref{Eq_Bn-VdM} gives
\[
\M=\sum_{r\in\Z^n} 
\Delta_{\B}(y_iq^{\kappa r})
\prod_{i=1}^n (-1)^{r_i}  
y_i^{(2n-1)r_i-i+1} q^{(2n-1)\kappa\binom{r_i}{2}}.  
\]
To complete the proof we apply the following variant of the 
$\B_n^{(1)}$ Macdonald identity\footnote{The actual
$\B_n^{(1)}$ Macdonald identity has the restriction $\abs{r}\equiv 0\pmod{2}$
in the sum over $r\in\Z^n$, which eliminates the factor $2$
on the right. To prove the form used here it suffices to take the
$a_1,\dots,a_{2n-1}\to 0$ and $a_{2n}\to -1$ limit in
Gustafson's multiple $_6\psi_6$ summation for the affine root system 
$\A_{2n-1}^{(2)}$, see \cite{Gustafson89}.}
\begin{multline}\label{Eq_Gustafson}
\sum_{r\in\Z^n}
\Delta_{\B}(xq^r) \prod_{i=1}^n (-1)^{r_i} 
x_i^{(2n-1)r_i-i+1} q^{(2n-1)\binom{r_i}{2}} \\
=2(q)_{\infty}^n 
\prod_{i=1}^n \theta(x_i;q)
\prod_{1\leq i<j\leq n} \theta(x_i/x_j,x_ix_j;q),
\end{multline}
with $(q,x)\mapsto (q^{\kappa},y)$.

\medskip

Again \eqref{Eq_RR-A2n2a} can be understood representation-theoretically, but
this time the relevant Kac--Moody algebra is $\A_{2n}^{(2)}$.
According to \cite[Lemma 2.3]{BW13}
the right-hand side of \eqref{Eq_a2n2} with $\hat{x}$ interpreted not
as $\hat{x}=(-x_1,\dots,-x_{n-1},-1)$ but as
\[
\hat{x}_i=\eup^{-\alpha_0-\cdots-\alpha_{n-i}}\;\;(1\leq i\leq n)
\]
and $q$ as
\begin{equation}\label{Eq_null}
q=\eup^{-2\alpha_0-\cdots-2\alpha_{n-1}-\alpha_n}
\end{equation}
is the $\A_{2n}^{(2)}$ character 
\[
\eup^{-m\La_n} \ch V(m\La_n).
\]
The substitution \eqref{Eq_A2n2-spec} corresponds to 
\begin{equation}\label{Eq_specA}
\eup^{\alpha_0}\mapsto -1 
\quad\text{and}\quad
\eup^{\alpha_i}\mapsto q\;\;(1\leq i\leq n).
\end{equation}
Denoting this by $F$, we have the general specialisation formula
\begin{multline}\label{Eq_principal}
F\big(\eup^{-\La} \ch V(\La)\big) 
=\frac{(q^{\kappa};q^{\kappa})_{\infty}^n}{(q)_{\infty}^n} 
\prod_{i=1}^n  \theta\big(q^{\la_1-\la_i+p+i};q^{\kappa}\big) \\ \times
\prod_{1\leq i<j\leq n} 
\theta\big(q^{\la_i-\la_j-i+j},q^{\la_i+\la_j-i-j+2n+1};q^{\kappa}\big),
\end{multline}
where $\kappa=2n+2\la_0+1$ and
\[
\La=2\la_n\La_0+(\la_{n-1}-\la_n)\La_1+\cdots+(\la_1-\la_2)\La_{n-1}+(\la_0-\la_1)\La_n
\]
for $\la=(\la_0,\la_1,\dots,\la_n)$ a partition.
For $\la=(m,0^n)$ (so that $\La=m\La_n$) this is in accordance with 
\eqref{Eq_RR-A2n2a}.

\subsection{Proof of \eqref{Eq_RR-A2n2b}}

In \eqref{Eq_interm} we set $x_n=q^{1/2}$ so that
\begin{align*}
\sum_{\substack{\la \\[1pt] \la_1\leq m}} &
q^{\abs{\la}} P'_{2\la}\big(x_1^{\pm},\dots,x_{n-1}^{\pm},q^{1/2};q\big) \\[-1mm]
&=\frac{1}{(q)_{\infty}^{n-1}(q^2)_{\infty} 
\prod_{i=1}^{n-1}  (q^{3/2}x_i^{\pm},qx_i^{\pm 2})_{\infty}
\prod_{1\leq i<j\leq n-1} (qx_i^{\pm} x_j^{\pm})_{\infty}} \\[1mm] 
&\qquad \times
\sum_{r_1,\dots,r_{n-1}=-\infty}^{\infty} \sum_{r_n=0}^{\infty}
\frac{\Delta_{\C}(\hat{x} q^r)}{\Delta_{\C}(\hat{x})}\,
\prod_{i=1}^n (-1)^{r_i} \hat{x}_i^{\kappa r_i}
q^{\frac{1}{2}\kappa r_i^2-nr_i},
\end{align*}
where $\kappa=2m+2n+1$ and $\hat{x}=(x_1,\dots,x_{n-1},q^{1/2})$.
The $r_n$-dependent part of the summand is
\[
(-1)^{r_n} q^{\kappa\binom{r_n+1}{2}-nr_n}
\frac{1-q^{2r_n+1}}{1-q}\prod_{i=1}^{n-1}
\frac{x_iq^{r_i}-q^{r_n+1/2}}{x_i-q^{1/2}}\cdot
\frac{x_iq^{r_n+r_i+1/2}-1}{x_iq^{1/2}-1},
\]
which is readily checked to be invariant under the substitution
$r_n\mapsto -r_n-1$.
Hence
\begin{align*}
\sum_{\substack{\la \\[1pt] \la_1\leq m}} & 
q^{\abs{\la}} P'_{2\la}\big(x_1^{\pm},\dots,x_{n-1}^{\pm},q^{1/2};q\big) \\[-1mm]
&=\frac{1}{2(q)_{\infty}^n
\prod_{i=1}^{n-1} (-1) \theta(q^{1/2}x_i,x_i^2;q) 
\prod_{1\leq i<j\leq n-1} \theta(x_i/x_j,x_ix_j;q)} \\[1mm] 
&\qquad \times
\sum_{r\in\Z^n} 
\Delta_{\C}(\hat{x} q^r)
\prod_{i=1}^n (-1)^{r_i} \hat{x}_i^{\kappa r_i-i}
q^{\frac{1}{2}\kappa r_i^2-nr_i+\frac{1}{2}}.
\end{align*}
Our next step is to replace $x_i\mapsto x_{n-i+1}$ and
$r_i\mapsto r_{n-i+1}$. By $\theta(x;q)=-x\theta(x^{-1};q)$ and
\eqref{Eq_simp}, this leads to
\begin{align}\label{Eq_a2n2b}
\sum_{\substack{\la \\[1pt] \la_1\leq m}} & 
q^{\abs{\la}} P'_{2\la}\big(q^{1/2},x_2^{\pm},\dots,x_n^{\pm};q\big) \\[-1mm]
&=\frac{1}{(q)_{\infty}^n \prod_{i=1}^n  \theta(-q^{1/2}\hat{x}_i;q) 
\theta(\hat{x}_i^2;q^2) 
\prod_{1\leq i<j\leq n} \theta(\hat{x}_i/\hat{x}_j,\hat{x}_i\hat{x}_j;q)} 
\notag \\[1mm] 
&\qquad \times
\sum_{r\in\Z^n} 
\Delta_{\C}(\hat{x} q^r)
\prod_{i=1}^n (-1)^{r_i} \hat{x}_i^{\kappa r_i-i+1}
q^{\frac{1}{2}\kappa r_i^2-nr_i}, \notag 
\end{align}
where now $\hat{x}=(q^{1/2},x_2,\dots,x_n)$.
Again we are at the point where we can specialise, letting
\begin{equation}\label{Eq_A2n2b-spec}
q\mapsto q^{2n-1},\qquad x_i\mapsto q^{n-i+1/2}=:\hat{x_i}\;\; (1\leq i\leq n).
\end{equation}
This is consistent, since $x_1=q^{1/2}\mapsto q^{n-1/2}$. By 
\begin{multline*}
(q^{2n-1};q^{2n-1})_{\infty}^n 
\prod_{i=1}^n \theta(-q^{2n-i};q^{2n-1}) 
\theta(q^{2n-2i+1};q^{4n-2})  \\ \times
\prod_{1\leq i<j\leq n} \theta(q^{j-i},q^{2n-i-j+1};q^{2n-1})
=2(q)_{\infty}^n
\end{multline*}
this gives rise to
\[
\sum_{\substack{\la \\[1pt] \la_1\leq m}} 
q^{2\abs{\la}} P_{2\la}\big(1,q,q^2,\dots;q^{2n-1}\big) \\
=\frac{\M}{2(q)_{\infty}^n},
\]
where
\[
\M:=\sum_{r\in\Z^n} \Delta_{\C}(\hat{x} q^{(2n-1)r})
\prod_{i=1}^n (-1)^{r_i} \hat{x}_i^{\kappa r_i-i+1}
q^{\frac{1}{2}(2n-1)\kappa r_i^2-(2n-1)nr_i}.
\]
Expressing $\M$ in determinantal form using
\eqref{Eq_detC} yields
\begin{multline*}
\M=\det_{1\leq i,j\leq n} \bigg(\sum_{r\in\Z} (-1)^r \hat{x}_i^{\kappa r-i+1}
q^{\frac{1}{2}(2n-1)\kappa r^2-(2n-1)nr} \\ \times
\Big( (\hat{x}_i q^{(2n-1)r})^{j-1}-(\hat{x}_iq^{(2n-1)r})^{2n-j+1}\Big)\bigg).
\end{multline*}
We now replace $(i,j)\mapsto (j,i)$ and, viewing the resulting determinant 
as of the form $\det\big(\sum_r u_{ij;r}-\sum_r v_{ij;r}\big)$, we change 
the summation index $r\mapsto -r$ in the sum over $u_{ij;r}$. 
The expression for $\M$ we obtain is exactly \eqref{Eq_same} 
except that $(-1)^{i-j} q^{b_{ij}}$ is replaced by $q^{c_{ij}}$
and $y_i$ is given by $q^{n-i+1}$ instead of $q^{(\kappa+1)/2-i}$.
Following the previous proof results in \eqref{Eq_RR-A2n2b}. 

\medskip

To interpret \eqref{Eq_RR-A2n2b} in terms of $\A_{2n}^{(2)}$,
we note that by \cite[Lemma 2.2]{BW13} the right-hand side of 
\eqref{Eq_a2n2b} in which $\hat{x}$ is interpreted as 
\[
\hat{x}_i=-q^{1/2} \eup^{\alpha_0+\cdots+\alpha_{i-1}}\;\;(1\leq i\leq n)
\]
(and $q$ again as \eqref{Eq_null}) corresponds to the $\A_{2n}^{(2)}$ 
character 
\[
\eup^{-2m\La_0} \ch V(2m\La_0).
\]
The specialisation \eqref{Eq_A2n2b-spec} is then again consistent
with \eqref{Eq_specA}. From \eqref{Eq_principal} with $\la=(m^{n+1})$
the first product-form on the right of \eqref{Eq_RR-A2n2b} 
immediately follows.

By level-rank duality we can also identify \eqref{Eq_RR-A2n2b} 
as a specialisation of the $\A_{2m}^{(2)}$ character 
$\eup^{-2n\La_0} \ch V(2n\La_0)$.

\subsection{Proof of \eqref{Eq_RR-Dn}}

Our final proof is the most complicated of the four.
Once again we iterate \eqref{Eq_key}, but now both
$x_{n-1}$ and $x_n$ remain unpaired:
\begin{multline*}
\lim_{y_1\to x_1^{-1}}\dots \lim_{y_{n-2}\to x_{n-2}^{-1}}
L^{(0)}_m(x_1,y_1,\dots,x_{n-2},y_{n-2},x_{n-1},x_n) \\
=L^{(n-2)}_m(x_1,\dots,x_n).
\end{multline*}
Accordingly, if we replace 
$x\mapsto(x_1,y_1,\dots,x_{n-2},y_{n-2},x_{n-1},x_n)$ 
in \eqref{Eq_Cn-RS-m} (thereby changing the rank from $n$ to $2n-2$)
and take the $y_i\to x_i^{-1}$ limit
for $1\leq i\leq n-2$, we obtain
\begin{align*}
&\sum_{\substack{\la \\[1pt] \la_1\leq m}} q^{\abs{\la}} 
P'_{2\la}\big(x_1^{\pm},\dots,x_{n-2}^{\pm},x_{n-1},x_n;q\big) \\[-1mm]
&\quad=\frac{1}{(q)_{\infty}^{n-2}(qx_{n-1}^2,qx_{n-1}x_n,qx_n^2)_{\infty}} \\[1mm]
& \qquad \times 
\frac{1}{\prod_{i=1}^{n-2} 
(qx_i^{\pm 2},qx_i^{\pm}x_{n-1},qx_i^{\pm}x_n)_{\infty}
\prod_{1\leq i<j\leq n-2} (qx_i^{\pm} x_j^{\pm})_{\infty}} \\[1mm] 
&\qquad \times
\sum_{r\in\Z^n} \frac{\Delta_{\C}(x q^r)}{\Delta_{\C}(x)}\,
\prod_{i=1}^n \bigg(\frac{x_i^{\kappa}}{x_{n-1}x_n}\bigg)^{r_i}
q^{\frac{1}{2}\kappa r_i^2-(n-1)r_i}
\frac{(x_i x_{n-1},x_i x_n)_{r_i}}{(qx_i/x_{n-1},qx_i/x_n)_{r_i}},
\end{align*}
where $\kappa=2m+2n$. It is important to note that the summand vanishes unless
$r_{n-1}$ and $r_n$ are both nonnegative.
Next we let $(x_{n-1},x_n)$ tend to $(q^{1/2},1)$ using
\[
\lim_{(x_{n-1},x_n)\to (q^{1/2},1)} 
\frac{\Delta_{\C}(x q^r)}{\Delta_{\C}(x)}\,
\prod_{i=1}^n 
\frac{(x_i x_{n-1},x_i x_n)_{r_i}}{(qx_i/x_{n-1},qx_i/x_n)_{r_i}}
=\phi_{r_n} 
\frac{\Delta_{\B}(\hat{x} q^r)}{\Delta_{\B}(\hat{x})},
\]
with $\phi_r$ as in \eqref{Eq_phi} and 
$\hat{x}:=(-x_1,\dots,-x_{n-2},-q^{1/2},-1)$.
Hence
\begin{align*}
&\sum_{\substack{\la \\[1pt] \la_1\leq m}} 
q^{\abs{\la}} 
P'_{2\la}\big(x_1^{\pm},\dots,x_{n-2}^{\pm},q^{1/2},1;q\big) \\[-1mm]
&\quad=\frac{1}{(q)_{\infty}^{n-1}(q^{3/2};q^{1/2})_{\infty} 
\prod_{i=1}^{n-2}  (qx_i^{\pm};q^{1/2})_{\infty}
(qx_i^{\pm 2})_{\infty}
\prod_{1\leq i<j\leq n-2} (qx_i^{\pm} x_j^{\pm})_{\infty}} \\[1mm] 
&\qquad \times
\sum_{r_1,\dots,r_{n-2}=-\infty}^{\infty}
\sum_{r_{n-1},r_n=0}^{\infty} \phi_{r_n}
\frac{\Delta_{\B}(\hat{x} q^r)}{\Delta_{\B}(\hat{x})}\,
\prod_{i=1}^n \hat{x}_i^{\kappa r_i}
q^{\frac{1}{2}\kappa r_i^2-\frac{1}{2}(2n-1)r_i}.
\end{align*}
Since the summand (without the factor $\phi_{r_n}$)
is invariant under the variable change
$r_n\mapsto -r_n$ as well as the change 
$r_{n-1}\mapsto -r_{n-1}-1$, we can rewrite this as
\begin{align*}
\sum_{\substack{\la \\[1pt] \la_1\leq m}} &
q^{\abs{\la}} 
P'_{2\la}\big(x_1^{\pm},\dots,x_{n-2}^{\pm},q^{1/2},1;q\big) \\
&=\frac{1}{(q)_{\infty}^{n-1}(q^{1/2};q^{1/2})_{\infty}
\prod_{i=1}^n \theta(\hat{x}_i;q^{1/2})
\prod_{1\leq i<j\leq n} \theta(\hat{x}_i/\hat{x}_j,\hat{x}_i\hat{x}_j)} 
\notag \\[1mm] 
&\qquad \times
\sum_{r\in\Z^n}
\Delta_{\B}(\hat{x} q^r) \prod_{i=1}^n \hat{x}_i^{\kappa r_i-i+1}
q^{\frac{1}{2}\kappa r_i^2-\frac{1}{2}(2n-1)r_i}, \notag 
\end{align*}
where, once again, we have used \eqref{Eq_simp} to clean up the
infinite products.
Before we can carry out the usual specialisation we need to
relabel $x_1,\dots,x_{n-2}$ as $x_2,\dots,x_{n-1}$ and, 
accordingly, we redefine $\hat{x}$ as $(-q^{1/2},-x_2,\dots,-x_{n-1},-1)$. 
Then
\begin{align}\label{Eq_dn}
\sum_{\substack{\la \\[1pt] \la_1\leq m}} & q^{\abs{\la}} 
P'_{2\la}\big(q^{1/2},x_2^{\pm},\dots,x_{n-1}^{\pm},1;q\big) \\
&=\frac{1}{(q)_{\infty}^{n-1}(q^{1/2};q^{1/2})_{\infty}
\prod_{i=1}^n \theta(\hat{x}_i;q^{1/2})
\prod_{1\leq i<j\leq n} \theta(\hat{x}_i/\hat{x}_j,\hat{x}_i\hat{x}_j)} 
\notag \\[1mm] 
&\qquad \times
\sum_{r\in\Z^n}
\Delta_{\B}(\hat{x} q^r) \prod_{i=1}^n \hat{x}_i^{\kappa r_i-i+1}
q^{\frac{1}{2}\kappa r_i^2-\frac{1}{2}(2n-1)r_i}, \notag 
\end{align}
for $n\geq 2$.
We are now ready to make the substitutions
\begin{equation*}\label{Eq_Dn-spec}
q\mapsto q^{2n-2},\qquad x_i\mapsto q^{n-i}\;\; (2\leq i\leq n-1),
\end{equation*}
so that $\hat{x}_i:=-q^{n-i}$ for $1\leq i\leq n$.
By 
\begin{multline*}
(q^{2n-2};q^{2n-2})_{\infty}^{n-1}(q^{n-1};q^{n-1})_{\infty}\prod_{i=1}^n 
\theta(-q^{n-i};q^{n-1}) \\ \times
\prod_{1\leq i<j\leq n} \theta(q^{j-i},q^{2n-i-j};q^{2n-2})=
4(q^2;q^2)_{\infty}(q)_{\infty}^{n-1}
\end{multline*}
and \eqref{Eq_PpP} this results in
\[
\sum_{\substack{\la \\[1pt] \la_1\leq m}}
q^{2\abs{\la}} P_{2\la}\big(1,q,q^2,\dots;q^{2n-3}\big) =
\frac{\M}{4(q^2;q^2)_{\infty}(q)_{\infty}^{n-1}},
\] 
with $\M$ given by
\[
\M:=\sum_{r\in\Z^n}
\Delta_{\B}(\hat{x} q^{2(n-1)r}) \prod_{i=1}^n \hat{x}_i^{\kappa r_i-i+1}
q^{(n-1)\kappa r_i^2-(n-1)(2n-1)r_i}.
\]
By the $\B_n$ determinant \eqref{Eq_Bn-VdM},
\begin{multline*}
\M=\det_{1\leq i,j\leq n} 
\bigg(\sum_{r\in\Z} \hat{x}_i^{\kappa r-i+1}
q^{(n-1)\kappa r^2-(n-1)(2n-1)r} \\ \times
\Big( \big(\hat{x}_i q^{2(n-1)r}\big)^{j-1}-
\big(\hat{x}_iq^{2(n-1)r}\big)^{2n-j}\Big)\bigg).
\end{multline*}
By the same substitutions that transformed \eqref{Eq_begin} into
\eqref{Eq_end} we obtain
\begin{multline*}
\M=\det_{1\leq i,j\leq n} \bigg((-1)^{i-j} q^{b_{ij}} 
\sum_{r\in\Z} y_i^{2(n-1) r-i+1} q^{2(n-1)\kappa \binom{r}{2}} \\
\times
\Big(\big(y_iq^{\kappa r}\big)^{j-1}+
\big(y_i q^{\kappa r}\big)^{2n-j-1}\Big)\bigg),
\end{multline*}
where $y_i$ and $b_{ij}$ are as in \eqref{Eq_yb}.
Recalling the Weyl denominator formula for $\D_n$ \cite{Krattenthaler99}
\[
\frac{1}{2} \det_{1\leq i,j\leq n} \big(x_i^{j-1}+x_i^{2n-j-1}\big)=
\prod_{1\leq i<j\leq n} (x_i-x_j)(x_ix_j-1)=:\Delta_{\D}(x)
\]
we can rewrite $\M$ in the form
\[
\M=2\sum_{r\in\Z^n} \Delta_{\D}(xq^r)
\prod_{i=1}^n y_i^{2(n-1) r_i-i+1} q^{2(n-1)\kappa \binom{r_i}{2}}.
\]
Taking the $a_1,\dots,a_{2n-2}\to 0$, $a_{2n-1}\to 1$ and $a_{2n}\to -1$ 
limit in Gustafson's multiple $_6\psi_6$ summation for the affine
root system $\A_{2n-1}^{(2)}$ \cite{Gustafson89} leads to
the following variant of the $\D_n^{(1)}$ Macdonald 
identity\footnote{As in the $\B_n^{(1)}$ case, the actual $\D_n^{(1)}$ 
Macdonald identity contains the restriction 
$\abs{r}\equiv 0\pmod{2}$ on the sum over $r$.}
\[
\sum_{r\in\Z^n} \Delta_{\D}(xq^r) \prod_{i=1}^n 
x_i^{2(n-1)r_i-i+1} q^{2(n-1)\binom{r_i}{2}}
=2(q)_{\infty}^n 
\prod_{1\leq i<j\leq n} \theta(x_i/x_j,x_ix_j;q).
\]
This implies the claimed product form for $\M$ and completes our 
proof.

\medskip

Again \eqref{Eq_RR-Dn} has a simple representation-theoretic interpretation.
According to \cite[Lemma 2.4]{BW13}
the right-hand side of \eqref{Eq_dn} in which $\hat{x}$ is interpreted not
as $\hat{x}=(-q^{1/2},-x_1,\dots,-x_{n-1},-1)$ but as
\[
\hat{x}_i=\eup^{-\alpha_i-\cdots-\alpha_n}\;\;(1\leq i\leq n)
\]
and $q$ as
\[
q=\eup^{-2\alpha_0-\cdots-2\alpha_n}
\]
yields the $\D_{n+1}^{(2)}$ character 
\[
\eup^{-2m\La_0} \ch V(2m\La_0).
\]
The specialisation \eqref{Eq_Dn-spec} then corresponds to
\[
\eup^{\alpha_0},\,\eup^{\alpha_n}\mapsto -1 
\quad\text{and}\quad
\eup^{\alpha_i}\mapsto q\;\;(2\leq i\leq n-1).
\]
Denoting this by $F$, we have
\begin{multline*}
F\big(\eup^{-\La} \ch V(\La)\big)
=\frac{(q^{\kappa};q^{\kappa})_{\infty}^n}
{(q^2;q^2)_{\infty}(q)_{\infty}^{n-1}}  \\ \times
\prod_{1\leq i<j\leq n} 
\theta\big(q^{\la_i-\la_j-i+j},q^{\la_i+\la_j-i-j+2n+1};q^{\kappa}\big),
\end{multline*}
where $\kappa=2n+2\la_0$ and
\[
\La=2(\la_0-\la_1)\La_0+(\la_1-\la_2)\La_1+\cdots+(\la_{n-1}-\la_n)\La_{n-1}+2\la_n\La_n,
\]
for $\la=(\la_0,\la_1,\dots,\la_n)$ a partition or half-partition 
(i.e., all $\la_i\in\Z+1/2$).
For $\la=(m,0^n)$ this agrees with \eqref{Eq_RR-Dn}.

\section{Proof of Theorem~\ref{Thm_Main4}}\label{Sec_Pf2}

For $k$ and $m$ integers such that $0\leq k\leq m$ we write
the near-rectangular partition $(\underbrace{m,\dots,m}_{r \text{ times}},k)$ 
as $(m^r,k)$.
\begin{theorem}[$\A_{n-1}^{(1)}$ RR and AG identities]\label{Thm_Main5}
Let $m$ and $n$ be positive integers and $k$ a nonnegative integer not
exceeding $m$. Then
\begin{multline}\label{Eq_limk}
\lim_{r\to\infty}
q^{-m\binom{r}{2}-kr} 
Q_{(m^r,k)}(1,q,q^2,\dots;q^n) \\
=\frac{(q^n;q^n)_{\infty} (q^{\kappa};q^{\kappa})_{\infty}^{n-1}}
{(q)_{\infty}^n}
\prod_{i=1}^{n-1} \theta(q^{i+k};q^{\kappa})
\prod_{1\leq i<j\leq n-1} \theta(q^{j-i};q^{\kappa}),
\end{multline}
where $\kappa=m+n$.
\end{theorem}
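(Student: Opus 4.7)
The plan is to identify the normalised limit $\lim_{r\to\infty}q^{-m\binom{r}{2}-kr}Q_{(m^r,k)}(1,q,q^2,\dots;q^n)$ with the principally specialised character $F_{\mathbbm{1}}\big(\eup^{-\La}\ch V(\La)\big)$ of the level-$m$ integrable highest-weight $\A_{n-1}^{(1)}$-module $V(\La)$ with $\La=(m-k)\La_0+k\La_\ell$ for a suitable fundamental weight $\La_\ell$, and then to invoke Kac's formula \eqref{Eq_Kac} together with the $\A_{n-1}^{(1)}$ Macdonald denominator identity to convert that character into the theta product on the right of \eqref{Eq_limk}.

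First I would combine \eqref{Eq_QQp}--\eqref{Eq_PpP} with the combinatorial formula for $Q'_\lambda(x;q)$ recalled in Section~\ref{Sec_HL}, applied at $\lambda=(m^r,k)$ with $(x_1,\dots,x_n)\mapsto(1,q,\dots,q^{n-1})$ and $q\mapsto q^n$, to express $Q_{(m^r,k)}(1,q,q^2,\dots;q^n)$ as an explicit multisum over chains $0=\mu^{(n)}\subseteq\cdots\subseteq\mu^{(0)}=(m^r,k)'$. Because $\mu^{(0)}$ is near-rectangular, each $\mu^{(a)}$ decomposes as a rectangular bulk of prescribed height on every column plus a bounded fluctuation; this decomposition is propagated along the chain, and the additional part of size $k$ at the bottom of $\lambda$ produces a controlled shift in the fluctuation lattice.

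Second, I would pass to the limit $r\to\infty$. The normalisation $q^{-m\binom{r}{2}-kr}$ is designed precisely to absorb the leading $r$-dependence of the exponents $\binom{\mu^{(a-1)}_i-\mu^{(a)}_i}{2}$ and $\mu^{(0)}_i$ in the combinatorial formula; after this cancellation the $q$-binomial coefficients $\qbin{\mu^{(a-1)}_i-\mu^{(a)}_{i+1}}{\mu^{(a-1)}_i-\mu^{(a)}_i}_{q^n}$ attached to the bulk rows converge to $1/(q^n;q^n)_\infty$, and the $b_\lambda(q^n)$ factor from \eqref{Eq_QQp} combines with them to yield the prefactor $(q^n;q^n)_\infty/(q)_\infty^n$ appearing in \eqref{Eq_limk}. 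The residual finite sum is a theta sum indexed by a sublattice of $\Z^n$ and, by a change of summation variable that transports the $k$-shift into the theta arguments, matches the principally specialised Weyl--Kac numerator for $V(\La)$. Kac's formula \eqref{Eq_Kac}, specialised to $\A_{n-1}^{(1)}$, then produces the first product on the right of \eqref{Eq_limk}; the second product follows by the level-rank symmetry $\kappa=m+n$, which at the level of theta products of modulus $\kappa$ is an elementary re-indexing.

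The main obstacle is the combinatorial bookkeeping in the $r\to\infty$ limit: one has to keep precise track of which summation variables become unbounded (contributing the $(q^n;q^n)_\infty$-type factors in the denominator) and which remain bounded (contributing the finite theta sum in the numerator), while simultaneously checking that the corner cell $k$ translates into the $\theta(q^{i+k};q^\kappa)$ factors rather than the $\theta(q^i;q^\kappa)$ factors seen in the $k=0$ specialisation (Theorem~\ref{Thm_Main4}). An alternative, more representation-theoretic route, is to recognise directly that $q^{-m\binom{r}{2}-kr}Q_{(m^r,k)}(1,q,q^2,\dots;q^n)$ is a principally specialised branching coefficient and to appeal immediately to the Weyl--Kac character formula; either way, once the representation-theoretic identification is made, the product form is an immediate consequence of \eqref{Eq_Kac} and the $\A_{n-1}^{(1)}$ Macdonald identity.
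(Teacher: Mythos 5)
Your high-level strategy agrees with the paper's endgame: identify the normalised limit with a principally specialised level-$m$ character of $\A_{n-1}^{(1)}$ (the paper gets $\La=(m-k)\La_0+k\La_1$) and then read off the product from Kac's formula \eqref{Eq_Kac}. However, the technical core of the proof is missing. The step where all the difficulty sits is the passage from a Hall--Littlewood polynomial of near-rectangular shape to the Weyl--Kac character in the limit $r\to\infty$, and your proposal handles it by assertion: you claim that, starting from the Kirillov chain formula for $Q'_{(m^r,k)}$, a ``bulk plus bounded fluctuation'' decomposition of the chain $0=\mu^{(n)}\subseteq\cdots\subseteq\mu^{(0)}=(m^r,k)'$ survives the limit and that ``the residual finite sum \dots matches the principally specialised Weyl--Kac numerator for $V(\La)$.'' That matching is precisely the nontrivial identity to be proved; nothing in your sketch explains how the nested chain sum reorganises into a lattice sum over $\{v\in\Z^n:\abs{v}=0\}$ with summand $s_{(k)}(xq^v)\,\Delta(xq^v)\prod_i x_i^{\kappa v_i}q^{\frac{1}{2}\kappa v_i^2+iv_i}$, nor how the corner part $k$ produces the Schur factor $s_{(k)}$ (which is what ultimately shifts $\theta(q^i)$ to $\theta(q^{i+k})$). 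Carrying this out from the chain formula would in effect require you to rederive the identity the paper imports from elsewhere.

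Concretely, the paper avoids this by using a different expansion: the special case of \cite[Corollary 3.2]{BW13} expressing $Q'_{(m^r,k)}(x;q)$ as a sum over pairs of compositions $u,v\in\NN^n$ with $\abs{u}=r+1$, $\abs{v}=r$. After replacing $r$ by $nr$ and shifting $u_i,v_i$ by $r$, the constraint forces $u=v+\epsilon_\ell$, the sum over $\ell$ produces $h_k=s_{(k)}$, and the only $r$-dependence left is in factors $1/(qx_i/x_j)_{r+v_i}$, so the $r\to\infty$ limit is immediate and yields the unspecialised Weyl--Kac character identity; only then does one apply the principal specialisation $q\mapsto q^n$, $x_i\mapsto q^{n-i}$ and \eqref{Eq_Kac}. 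Note also two smaller points: the paper reduces to $r$ a multiple of $n$ to make the limit clean (your bulk decomposition would need a similar device, since the chain steps are of size roughly $r/n$), and working at general $x$ before specialising is what makes the identification with the character unambiguous --- recognising a principally specialised numerator from a single-variable $q$-series, as you propose, is considerably more delicate. Finally, \eqref{Eq_limk} has only one product form, so the appeal to level-rank duality for a ``second product'' belongs to Theorem~\ref{Thm_Main4}, not to this statement. As it stands, then, the proposal is a plausible programme but has a genuine gap at its central step.
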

For $k=0$ (or $k=m$) this yields Theorem~\ref{Thm_Main4}.
Before we give a proof of the above theorem we remark that by a
similar calculation it also follos that
\begin{multline*}
\lim_{r\to\infty}
q^{-m\binom{r+1}{2}} 
Q_{(k,m^r)}(1,q,q^2,\dots;q^n) \\
=\qbin{k-m+n-1}{n-1}_q
\frac{(q^n;q^n)_{\infty} (q^{\kappa};q^{\kappa})_{\infty}^{n-1}}
{(q)_{\infty}^n}
\prod_{1\leq i<j\leq n} \theta(q^{j-i};q^{\kappa}),
\end{multline*}
for $k\geq m$. 

\begin{proof}[Proof of Theorem~\ref{Thm_Main5}]
The following identity for the modified Hall--Littlewood polynomials
indexed by near-rectangular partitions is a special case of 
\cite[Corollary 3.2]{BW13}:
\begin{multline*}
Q'_{(m^r,k)}(x;q)=(q)_r(q)_1
\sum_{\substack{u\in\NN^n\\ \abs{u}=r+1}}
\sum_{\substack{v\in\NN^n\\ \abs{v}=r}}
\prod_{i=1}^n x_i^{ku_i+(m-k)v_i} q^{k\binom{u_i}{2}+(m-k)\binom{v_i}{2}}  
\\ \times
\prod_{i,j=1}^n 
\frac{(qx_i/x_j)_{u_i-u_j}}{(qx_i/x_j)_{u_i-v_j}}\cdot
\frac{(qx_i/x_j)_{v_i-v_j}}{(qx_i/x_j)_{v_i}}.
\end{multline*}
It suffices to compute the limit on the left-hand side of \eqref{Eq_limk}
for $r$ a multiple of $n$. Hence we replace $r$ by $nr$ in the above
expression, and then shift $u_i\mapsto u_i+r$ and
$v_i\mapsto v_i+r$ for all $1\leq i\leq n$, to obtain
\begin{align*}
Q'_{(m^{nr},k)}(x;q)&=(x_1\cdots x_n)^{mr}
q^{mn\binom{r}{2}+kr} (q)_{nr}(q)_1 \\ 
& \quad \times
\sum_{\substack{u\in\Z^n\\ \abs{u}=1}}
\sum_{\substack{v\in\Z^n\\ \abs{v}=0}}
\prod_{i=1}^n x_i^{ku_i+(m-k)v_i} 
q^{k\binom{u_i}{2}+(m-k)\binom{v_i}{2}}  \\ 
& \qquad\qquad\qquad \times \prod_{i,j=1}^n 
\frac{(qx_i/x_j)_{u_i-u_j}}{(qx_i/x_j)_{u_i-v_j}}\cdot
\frac{(qx_i/x_j)_{v_i-v_j}}{(qx_i/x_j)_{r+v_i}}.
\end{align*}
Since the summand vanishes unless $u_i\geq v_i$ for all $i$ and
$\abs{u}=\abs{v}+1$ it follows that $u=v+\epsilon_{\ell}$, for
some $k=1,\dots,n$, where $(\epsilon_{\ell})_i=\delta_{\ell i}$.
Hence
\begin{align*}
Q'_{(m^{nr},k)}(x;q)&=(x_1\cdots x_n)^{mr}
q^{mn\binom{r}{2}+kr} (q)_{nr} \\ 
&\quad \times \sum_{\substack{v\in\Z^n\\ \abs{v}=0}}
\prod_{i=1}^n x_i^{mv_i} q^{m\binom{v_i}{2}} 
\prod_{i,j=1}^n \frac{(qx_i/x_j)_{v_i-v_j}}{(qx_i/x_j)_{r+v_i}} \\
& \qquad \qquad \times 
\sum_{\ell=1}^n \big(x_{\ell}q^{v_{\ell}}\big)^k 
\prod_{\substack{i=1 \\ i\neq k}}^n \frac{1}{1-q^{v_i-v_{\ell}} x_i/x_{\ell}}.
\end{align*}
Next we use
\[
\prod_{i,j=1}^n (qx_i/x_j)_{v_i-v_j}
=\frac{\Delta(xq^v)}{\Delta(x)}\,
(-1)^{(n-1)\abs{v}} q^{-\binom{\abs{v}}{2}}
\prod_{i=1}^n x_i^{nv_i-\abs{v}} q^{n\binom{v_i}{2}+(i-1)v_i},
\]
where $\Delta(x):=\prod_{1\leq i<j\leq n} (1-x_i/x_j)$,
and
\[
\sum_{\ell=1}^n x_{\ell}^k 
\prod_{\substack{i=1 \\ i\neq k}}^n \frac{1}{1-x_i/x_{\ell}}=
\sum_{1\leq i_1\leq i_2\leq\cdots\leq i_k\leq n} x_{i_1}x_{i_2}\cdots x_{i_k}
=h_k(x)=s_{(k)}(x),
\]
where $h_k$ and $s_{\la}$ are the complete symmetric and Schur function,
respectively. Thus
\begin{multline*}
Q'_{(m^{nr},k)}(x;q)=(x_1\cdots x_n)^{mr} q^{mn\binom{r}{2}+kr}(q)_{nr} \\
\times 
\sum_{\substack{v\in\Z^n\\ \abs{v}=0}} s_{(k)}(xq^v)
\frac{\Delta(xq^v)}{\Delta(x)}
\prod_{i=1}^n x_i^{\kappa v_i}q^{\frac{1}{2}\kappa v_i^2+iv_i}
\prod_{i,j=1}^n \frac{1}{(qx_i/x_j)_{r+v_i}},
\end{multline*}
where $\kappa:=m+n$.
Note that the summand vanishes unless $v_i\geq -r$ for all $i$.
This implies the limit
\begin{multline*}
\lim_{r\to\infty}
q^{-mn\binom{r}{2}-kr} \frac{Q'_{(m^{nr},k)}(x;q)}
{(x_1\cdots x_n)^{mr}} \\
=\frac{1}{(q)_{\infty}^{n-1}\prod_{1\leq i<j\leq n} \theta(x_i/x_j;q)} 
\sum_{\substack{v\in\Z^n\\ \abs{v}=0}}
s_{(k)}(xq^v)
\Delta(xq^v) \prod_{i=1}^n x_i^{\kappa v_i}q^{\frac{1}{2}\kappa v_i^2+iv_i}.
\end{multline*}
The expression on the right is exactly the 
Weyl--Kac formula for the level-$m$ $\A_{n-1}^{(1)}$ character \cite{Kac90}
\[
\eup^{-\La} \ch V(\La) , \quad \La=(m-k)\La_0+k\La_1,
\]
provided we identify
\[
q=\eup^{-\alpha_0-\alpha_1-\cdots-\alpha_{n-1}}
\quad\text{and}\quad
x_i/x_{i+1}=\eup^{-\alpha_i}\;\; (1\leq i\leq n-1).
\]
Hence 
\[
\lim_{r\to\infty}
q^{-mn\binom{r}{2}-kr} \frac{Q'_{(m^{nr},k)}(x;q)}
{(x_1\cdots x_n)^{mr}}
=\eup^{-\Lambda} \ch V(\Lambda),
\]
with $\La$ as above. 
For $m=1$ and $k=0$ this was obtained in \cite{Kirillov00} by 
more elementary means.
The simultaneous substitutions $q\mapsto q^n$ and $x_i\mapsto q^{n-i}$
correspond to the principal specialisation \eqref{Eq_PS}. 
From \eqref{Eq_Kac} we can then read off the product form claimed
in \eqref{Eq_limk}.
\end{proof}

\bibliographystyle{amsplain}

\end{document}